\documentclass[12pt]{amsart}
\usepackage[active]{srcltx}
\usepackage{calc,amssymb,amsthm,amsmath,amscd, eucal,ulem}
\usepackage{alltt}
\usepackage[left=1.35in,top=1.25in,right=1.35in,bottom=1.25in]{geometry}
\RequirePackage[dvipsnames,usenames]{color}
\usepackage{xcolor}		
\usepackage{ulem}	

\normalem
\input{kmacros3.sty}
\input{xy}
\xyoption{all}
\usepackage{tikz}

\numberwithin{equation}{theorem}

\renewcommand{\m}{\mathfrak{m}}
\renewcommand{\n}{\mathfrak{n}}

\DeclareMathOperator{\Spa}{Spa}

\usepackage{fullpage}

\usepackage{setspace}
% \singlespacing, \doublespacing, \onehalfspacing,
%\spacing{1.3}
\usepackage{hyperref}
% \url{http://www.umich.edu/~kevtuck}, \href{mailto:kevtuck@umich.edu}{kevtuck@umich.edu}

\usepackage{enumerate}
%\begin{enumerate}[(1.)]

\usepackage{graphicx}

\usepackage[all,cmtip]{xy}
%\ar[r] to
%\ar@{|->}[r] mapsto
%\ar@{-}[r]} line
%\ar@{^{(}->}[r] injective arrow
%\ar@{->>}[r] surjective arrow
%\ar@{.>}[r] just the head of an arrow
%\ar@{-->}[r] dashed arrow
%\ar@{=>}[r] double lined arrow
%\ar@{~>}[r] squigly arrow
%\ar@{=}[r] equals
%
% superscripts and subscripts add labels to the arrows
%
%Commutative Diagram
%
%\[  \xymatrix{
%%
%A    \ar[d]    \ar[r]    &     B   \ar[d]   \\
%C    \ar[r]                 &     D
%%
%} \]
%
%
%Short Exact Sequence
%
%\[  \xymatrix{
%%
%0 \ar[r] &   A   \ar[r] &   B   \ar[r] &   C   \ar[r] & 0
%%
%} \]
%

\usepackage{verbatim}

\theoremstyle{theorem}

%The todo box!

%What is going on?  Why isn't \O a script O?!?

\begin{document}
\title{Big Cohen-Macaulay algebras and the vanishing conjecture for maps of Tor in mixed characteristic}
\author{Raymond Heitmann}
\author{Linquan Ma}
\address{Department of Mathematics\\ University of Texas at Austin\\ Austin \\ TX 78712}
\email{heitmann@math.utexas.edu}
\address{Department of Mathematics\\ University of Utah\\ Salt Lake City\\ Utah 84112}
\email{lquanma@math.utah.edu}
\maketitle
\begin{abstract}
We prove a version of weakly functorial big Cohen-Macaulay algebras that suffices to establish Hochster-Huneke's vanishing conjecture for maps of Tor in mixed characteristic. As a corollary, we prove an analog of Boutot's theorem that direct summands of regular rings are pseudo-rational in mixed characteristic. Our proof uses perfectoid spaces and is inspired by the recent breakthroughs on the direct summand conjecture by Andr\'{e} and Bhatt.
\end{abstract}

\section{Introduction and Preliminaries}

In a recent breakthrough, Y. Andr\'{e} \cite{AndreDirectsummandconjecture} settled Hochster's direct summand conjecture which dates back to 1969.

\begin{theorem}[Andr\'{e}]
\label{theorem--direct summand}
Let $A\to R$ be a finite extension of Noetherian rings. If $A$ is regular, then the map is split as a map of $A$-modules.
\end{theorem}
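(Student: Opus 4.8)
\textbf{Reduction to the monomial conjecture.} Since this is André's theorem, we only recall the argument, which is used below as a black box. Whether $A\to R$ splits may be checked after localizing $A$ at a maximal ideal and completing, since $\widehat{A_{\mathfrak m}}$ is faithfully flat over $A_{\mathfrak m}$; so we may assume $A$ is a complete regular local ring with maximal ideal $\mathfrak m$ and $\dim A=d$, and (replacing $R$ by $R/\mathfrak q$ for a minimal prime $\mathfrak q$ with $\dim R/\mathfrak q=d$) that $R$ is a module-finite domain over $A$. As $A$ is Gorenstein with $\omega_A\cong A$, local duality shows that $A\hookrightarrow R$ splits if and only if $H^d_{\mathfrak m}(A)\to H^d_{\mathfrak m}(R)$ is injective; since $H^d_{\mathfrak m}(A)=\varinjlim_t A/(x_1^t,\dots,x_d^t)$ for a regular system of parameters $x_1,\dots,x_d$ and its socle is spanned by the class of $(x_1\cdots x_d)^t$ in $A/(x_1^{t+1},\dots,x_d^{t+1})$, this injectivity is precisely the \emph{monomial conjecture} for $(A;x_1,\dots,x_d)$: $(x_1\cdots x_d)^t\notin(x_1^{t+1},\dots,x_d^{t+1})R$ for all $t\ge 0$. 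In equal characteristic this is classical (Frobenius in characteristic $p$, reduction to characteristic $p$ in characteristic $0$), so the only remaining case is $A$ of mixed characteristic $(0,p)$; by Cohen's structure theorem $A\cong V[[x_2,\dots,x_d]]$ with $V$ a complete DVR, and we take $x_1$ to be a uniformizer of $V$.

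\textbf{A perfectoid tower over $A$.} Let $V_\infty$ be the $p$-adic completion of $V[x_1^{1/p^\infty}]$, a perfectoid valuation ring, and let $A_\infty$ be the $p$-adic completion of $V_\infty[x_2^{1/p^\infty},\dots,x_d^{1/p^\infty}]$. Then $A_\infty$ is perfectoid and $p$-completely faithfully flat over $A$, and — the feature that makes the end of the proof work — it carries a $\bZ[1/p]^{d-1}_{\ge 0}$-grading by the exponents of $x_2^{1/p^\infty},\dots,x_d^{1/p^\infty}$, with degree-zero part $V_\infty$; hence membership in an ideal generated by monomials in $x_1,\dots,x_d$ can be tested degree by degree, and within the degree-zero part by $x_1$-divisibility. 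Set $g=x_1x_2\cdots x_d$.

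\textbf{André's perfectoid Abhyankar lemma (the crux).} The key input — and the step I expect to be the main obstacle — is the following. After replacing $R$ by a larger module-finite domain extension of $A$, the finite extension $A_\infty[1/g]\to R\otimes_A A_\infty[1/g]$ becomes finite \etale, and one can produce a perfectoid $A_\infty$-algebra $B_\infty$ with $g^{1/p^\infty}\in B_\infty$, together with an $A_\infty$-algebra map $R\otimes_A A_\infty\to B_\infty$, such that $A_\infty\to B_\infty$ is \emph{almost split} as a map of $A_\infty$-modules, the almost mathematics being taken with respect to the ideal $(g^{1/p^\infty})$; concretely, for every $N$ there is an $A_\infty$-linear map $\phi_N\colon B_\infty\to A_\infty$ with $\phi_N(1)=g^{1/p^N}$. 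This is proved by combining Scholze's tilting correspondence and almost purity theorem with a trace/different computation over the perfectoid base $A_\infty$: one arranges that the ramification of the \etale cover is supported along $V(g)$, adjoins compatible $p$-power roots of $g$, and checks that the cover splits almost after passing to the $g^{1/p^\infty}$-adic completion. The difficulty is genuine and is the essential new contribution of André: perfectoid rings are very far from Noetherian, and making the cover \etale while tracking its ramification along $V(g)$ lies outside Scholze's original framework.

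\textbf{Conclusion.} Finally, run the reductions backwards. If the monomial conjecture failed, we would have $(x_1\cdots x_d)^t=\sum_{i=1}^d a_i x_i^{t+1}$ in $R$ for some $t\ge 0$ and $a_i\in R$; this relation persists in $R\otimes_A A_\infty$, hence in $B_\infty$. Applying $\phi_N$ gives $g^{1/p^N}(x_1\cdots x_d)^t\in(x_1^{t+1},\dots,x_d^{t+1})A_\infty$. But in the graded ring $A_\infty$ the left-hand side equals $(x_1x_2\cdots x_d)^{t+1/p^N}$, whose $x_j$-degree is $t+1/p^N<t+1$ for each $j=2,\dots,d$ and whose degree-zero coefficient, $x_1^{t+1/p^N}$, is not divisible by $x_1^{t+1}$ in $V_\infty$; hence it does not lie in the monomial ideal $(x_1^{t+1},\dots,x_d^{t+1})A_\infty$ — a contradiction (the passage to the $p$-adic completion does not affect this degreewise check). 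Therefore the monomial conjecture holds for every complete regular local ring, and by the first step $A\to R$ splits.
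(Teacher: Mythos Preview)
The paper does not prove this theorem directly; it is stated as Andr\'{e}'s result and used as background. The paper does recover it as a consequence of its main results (Theorem~\ref{theorem--vanishing theorem for maps of Tor} implies Theorem~\ref{theorem--direct summand}, as noted in the introduction), but by a route that explicitly \emph{avoids} the perfectoid Abhyankar lemma, going instead through weakly functorial big Cohen-Macaulay algebras and the colon-ideal estimates of Lemma~\ref{lemma-annihilator of almost projective module}. Your sketch, by contrast, follows Andr\'{e}'s original line and invokes the Abhyankar lemma as its central black box, so the two approaches are genuinely different.

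That said, your sketch has a real gap. You set $g = x_1 x_2 \cdots x_d$ and then claim that ``after replacing $R$ by a larger module-finite domain extension of $A$, the finite extension $A_\infty[1/g]\to R\otimes_A A_\infty[1/g]$ becomes finite \'etale.'' This cannot be arranged: the branch locus of $A\to R$ is cut out by the discriminant $h\in A$, which in general divides no power of $x_1\cdots x_d$ (e.g.\ $A=W(k)[[x]]$, $R=A[y]/(y^2-(p+x))$ with $p\neq 2$, where the branch locus is $V(p+x)$), and enlarging $R$ only makes the discriminant bigger, not smaller. In Andr\'{e}'s argument one must take $g$ divisible by the discriminant and adjoin $g^{1/p^\infty}$ to the perfectoid base; the almost splitting is then with respect to $(pg)^{1/p^\infty}$ for \emph{that} $g$. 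With the correct $g$ your clean graded endgame is no longer available, since $g^{1/p^N}(x_1\cdots x_d)^t$ is not a monomial in the $x_i$; instead one uses that the enlarged perfectoid algebra is almost faithfully flat over $A$ modulo each $p^m$, so that $(pg)^{1/p^N}$ lies almost in $(x_1,\dots,x_d)$ for every $N$, forcing a power of $pg$ into $\bigcap_m (p,x_2,\dots,x_d)^m=0$. Your reduction step and the overall architecture are correct; the identification $g=x_1\cdots x_d$ is where the argument breaks.
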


This was previously only known for rings containing a field \cite{HochsterTopicsintheHomologicaltheoryofmodules} and for rings of dimension less than or equal to three \cite{HeitmannDirectsummandconjectureindimensionthree}; what is new and striking is the general mixed characteristic case. A simplified and shorter proof of Theorem \ref{theorem--direct summand} was later found by Bhatt \cite{BhattDirectsummandandDerivedvariant}. But Andr\'{e}'s argument \cite{AndreDirectsummandconjecture} also proved the stronger conjecture that balanced big Cohen-Macaulay algebras exist in mixed characteristic.\footnote{Andr\'{e} stated the existence of big Cohen-Macaulay algebras for complete local domains \cite[Th\'{e}or\`{e}me 0.7.1]{AndreDirectsummandconjecture}, but the general case follows by killing a minimal prime and taking the completion.}
Recall that $B$ is called a {\it balanced big Cohen-Macaulay algebra} for the local ring $(R,\m)$ if $\m B\neq B$ and every system of parameters for $R$ is a regular sequence on $B$. It is a conjecture of Hochster that such algebras exist in general and he proved this for rings that contain a field \cite{HochsterTopicsintheHomologicaltheoryofmodules} \cite{HochsterBigCMalgebrasandembeddabilityWittvectors}. Andr\'{e}'s solution in mixed characteristic depends on his deep result in \cite{AndrePerfectoidAbhyankarLemma} that gives a generalization of the almost purity theorem: the perfectoid Abhyankar lemma.

The purpose of this paper is to prove that weakly functorial balanced big Cohen-Macaulay algebras exist for {\it certain} surjective ring homomorphisms in mixed characteristic, a result that has many applications.

\begin{theorem}[=Theorem \ref{theorem--big CM}]
\label{theorem--main1}
Let $(R,\m,k)$ be a complete local domain with $k$ algebraically closed, and let $Q\subseteq R$ be a height one prime ideal. Suppose both $R$ and $R/Q$ have mixed characteristic. Then there exists a commutative diagram:
\[\xymatrix{
R \ar[r] \ar[d] & R/Q \ar[d] \\
B \ar[r] & C
}
\]
where $B$, $C$ are balanced big Cohen-Macaulay algebras for $R$ and $R/Q$ respectively.
\end{theorem}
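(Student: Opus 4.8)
The plan is to follow Andr\'{e}'s strategy for constructing big Cohen-Macaulay algebras, but to carry it out simultaneously for $R$ and $R/Q$ so that the resulting algebras fit into a commutative square. The starting point is the observation that since $R$ has mixed characteristic and $R/Q$ has mixed characteristic, the prime $p$ lies in $Q$ is impossible; rather $p\notin Q$, and we may choose a system of parameters for $R$ of the form $p, x_2,\dots,x_d$ with $x_2,\dots,x_d \in Q$ mapping to a system of parameters of $R/Q$ (using that $Q$ has height one, so $\dim R/Q = d-1$). First I would reduce to a situation amenable to perfectoid techniques: after a finite extension and completion (which does not disturb the balanced big Cohen-Macaulay property by the footnote's remark), arrange that $R$ is a module-finite extension of a complete regular local ring, and realize both $R$ and $R/Q$ inside a common perfectoid setup. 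The key point is that the almost purity / perfectoid Abhyankar lemma input from \cite{AndrePerfectoidAbhyankarLemma}, as used by Andr\'{e}, produces an "almost Cohen-Macaulay" perfectoid algebra $B^+$ over $R$; I would want a compatible such construction $C^+$ over $R/Q$, obtained by base change $B^+ \widehat{\otimes}_R R/Q$ (suitably $p$-adically completed and perfectoidized), so that the surjection $R\to R/Q$ lifts to an almost-surjection $B^+ \to C^+$.

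The heart of the argument is to upgrade "almost Cohen-Macaulay" to genuine balanced big Cohen-Macaulay while preserving the map. Here I would invoke the weak functoriality machinery for producing balanced big Cohen-Macaulay algebras out of almost Cohen-Macaulay ones — the passage from an algebra $B^+$ that is almost Cohen-Macaulay with respect to $p, x_2,\dots,x_d$ to an honest balanced big Cohen-Macaulay $R$-algebra $B$, done by an explicit colimit/algebra-modification construction (in the style of Hochster's modifications, adapted to the almost setting as in Andr\'{e} and Bhatt). The crucial feature I need is that this construction is functorial enough: applying it to the map $B^+ \to C^+$ of almost Cohen-Macaulay algebras (over $R$ and $R/Q$ respectively, with the chosen systems of parameters matching up under $R\to R/Q$) yields a map $B \to C$ of the resulting big Cohen-Macaulay algebras, fitting into the square with $R\to R/Q$. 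One must check that $C$ is nonzero mod its maximal ideal (equivalently $\m (R/Q) \cdot C \neq C$), which follows because $C$ receives a map from the almost Cohen-Macaulay $C^+$ with $\m C^+ \neq C^+$ almostly.

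The main obstacle, and the technical core of the paper, is the compatibility: ordinary constructions of big Cohen-Macaulay algebras are highly non-canonical, so making the two constructions (over $R$ and over $R/Q$) talk to each other requires setting up the algebra modifications over $R$ and then pushing them forward along $R \to R/Q$ in a controlled way, checking at each stage that the ``almost'' error terms (measured against a fixed compatible system of $p$-power roots, or the relevant ideal of ``almost zero'' elements) do not obstruct the relations we need to kill. In particular one must verify that the system of parameters $p, x_2,\dots,x_d$ of $R$ and its image in $R/Q$ behave well under the perfectoidization and that killing colimits over $R$ induces the analogous colimits over $R/Q$; this is where the hypotheses that $k$ is algebraically closed (to control residue field extensions and to have enough roots of unity / nice local structure), that $R$ is a complete domain, and that $Q$ has height one (so $R/Q$ is again a complete local domain of dimension $d-1$ of mixed characteristic) all get used. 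Once weak functoriality in this form is established, the theorem follows by feeding in the perfectoid almost-Cohen-Macaulay pair $(B^+, C^+)$ attached to $R \to R/Q$.
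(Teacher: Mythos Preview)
Your proposal has a genuine gap at its core. The step ``construct $C^+$ by base change $B^+\widehat{\otimes}_R R/Q$'' is exactly where the difficulty lies and is not justified: even if $B^+$ is almost Cohen--Macaulay over $R$, killing the height-one prime $Q$ gives no control whatsoever on the colon ideals $(p,x_2,\dots,x_s)C^+:x_{s+1}$, so there is no reason $C^+$ is almost Cohen--Macaulay over $R/Q$. (Your choice of parameters is also garbled: you cannot have $x_2,\dots,x_d\in Q$ \emph{and} have their images be a system of parameters of $R/Q$; what one actually wants is one parameter, say $x_1$, lying in $Q$ so that $p,x_2,\dots,x_{d-1}$ descends.) More broadly, the authors explicitly note that their weak functoriality statement does \emph{not} seem to follow from the perfectoid Abhyankar lemma, so invoking Andr\'e's strategy wholesale is not a viable plan here.

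The paper's actual argument is structurally quite different. It avoids the perfectoid Abhyankar lemma entirely. First one normalizes $R$ and chooses a Noether normalization $A=W(k)[[x_1,\dots,x_{d-1}]]\hookrightarrow R$ with $Q\cap A=(x_1)$ and $A_{(x_1)}\to R_Q$ essentially \'etale. For each $n$ one builds, via Andr\'e's construction of $A_\infty$ and the \emph{almost purity theorem} applied over the rational localizations $A_\infty\langle p^n/g\rangle$ and $(A/x_1A)_\infty\langle p^n/g\rangle$, a compatible pair of integral perfectoid algebras $R_{\infty,n}\to (R/Q)_{\infty,n}$ under $R\to R/Q$. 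These are not almost Cohen--Macaulay in the na\"ive sense; the key technical input is a direct computation (Lemma~\ref{lemma-annihilator of almost projective module}) showing that $(p^{1/p^\infty})(pg)^{1/p^a}$ kills the relevant Koszul cohomology of $R_{\infty,n}$ and $(R/Q)_{\infty,n}$ whenever $n>p^a+m$. One then appeals to Hochster's criterion: the pair $(B,C)$ exists iff there is no bad \emph{double} sequence of partial algebra modifications of $R$ over $R\to R/Q$. Supposing such a bad sequence exists, Lemma~\ref{lemma-Hochster} lets one map it into $R_{\infty,n}[1/c]$ and $(R/Q)_{\infty,n}[1/c]$ with $c=(pg)^{2/p^a}$ and denominator exponent $N$ depending only on the fixed sequence, not on $n$ or $a$; chasing $1$ through the diagram forces $(pg)^{2N/p^a}\in\m(R/Q)_{\infty,n}$, and pushing this back to $(A/x_1A)_\infty$ via Lemma~\ref{lemma-forcing elements in Ainfinity} and letting $a\to\infty$ gives $p^2g=0$ in $A/x_1A$, a contradiction. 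The whole point is that the ``almost'' defect is quantified by $c=(pg)^{2/p^a}$ with $a$ arbitrary, which is what lets the argument close; your sketch never produces such a uniform quantitative handle.
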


Our method of proving Theorem \ref{theorem--main1} {\it avoids} the perfectoid Abhyankar lemma in \cite{AndrePerfectoidAbhyankarLemma} and thus is much shorter than Andr\'{e}'s argument. More importantly, the weakly functorial property we prove is {\it new}.\footnote{In fact, our version of the existence of weakly functorial big Cohen-Macaulay algebras does not even seem to follow from the perfectoid Abhyankar lemma \cite{AndrePerfectoidAbhyankarLemma}.} We should note that, in equal characteristic, the existence of weakly functorial balanced big Cohen-Macaulay algebras was known in general \cite{HochsterHunekeApplicationsofbigCMalgebras}. Nonetheless, the version we prove is strong enough to settle Hochster-Huneke's vanishing conjecture for maps of Tor \cite{HochsterHunekeApplicationsofbigCMalgebras} in mixed characteristic.

\begin{theorem}[=Theorem \ref{theorem--vanishing theorem for maps of Tor}]
\label{theorem--main2}
Let $A\to R\to S$ be maps of Noetherian rings such that $A\to S$ is a local homomorphism of mixed characteristic regular local rings and $R$ is a module-finite torsion-free extension of $A$. Then for all $A$-modules $M$, the map $\Tor_i^A(M, R)\to \Tor_i^A(M, S)$ vanishes for all $i\geq 1$.
\end{theorem}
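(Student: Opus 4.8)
The plan is to deduce the vanishing theorem for maps of Tor from the weakly functorial big Cohen-Macaulay property of Theorem \ref{theorem--main1}, following the strategy of Hochster–Huneke \cite{HochsterHunekeApplicationsofbigCMalgebras} but adapted to the mixed characteristic setting. First I would perform a sequence of standard reductions to put $A\to R\to S$ in the shape where Theorem \ref{theorem--main1} applies. By completing $S$ (which is faithfully flat over the regular local ring $S$, so Tor is unaffected) and replacing $A$ by its image in $S$, we may assume $A$, $S$ are complete regular local rings with $A\to S$ local. Since $R$ is a module-finite torsion-free extension of $A$ and we only care about the image of $\Tor^A_i(M,R)$ in $\Tor^A_i(M,S)$, we may replace $R$ by $R/Q_0$ for a minimal prime $Q_0$ of $R$ lying under the kernel of some chosen maximal-ideal-contraction, and then normalize: thus we may assume $R$ is a complete local domain, module-finite over $A$, with $A\subseteq R$. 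We may also harmlessly enlarge the residue field to be algebraically closed by a faithfully flat base change (gonflement), since this preserves regularity of $A$ and $S$, preserves the module-finite domain extension, and does not change whether the relevant maps of Tor vanish.

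**The heart of the argument** is the following: let $B$ be a balanced big Cohen-Macaulay $R$-algebra. The composite $A\to R\to B$ makes $B$ a balanced big Cohen-Macaulay $A$-algebra, and since $A$ is regular, $B$ is faithfully flat over $A$ (a balanced big CM algebra over a regular local ring is flat — this is the classical fact that over a regular local ring, a module on which a system of parameters is regular is flat). Therefore $\Tor^A_i(M,B)=0$ for all $i\ge 1$ and all $A$-modules $M$. The map $R\to B$ then gives a factorization $\Tor^A_i(M,R)\to\Tor^A_i(M,B)=0$, so the map $\Tor^A_i(M,R)\to\Tor^A_i(M,B)$ is zero. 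The remaining task is to show the map into $\Tor^A_i(M,S)$ factors through, or is controlled by, the map into $\Tor^A_i(M,B)$. This is exactly where weak functoriality enters: we need a commutative square relating $R\to B$ and $S\to(\text{something flat over }A)$ compatibly. Here I would use Theorem \ref{theorem--main1} with $R$ playing the role of the complete local domain and an appropriate height-one prime $Q$; but since we actually want to land in the \emph{regular} ring $S$, the cleaner route is: $S$ is itself a big Cohen-Macaulay $S$-algebra, and we need a map $B\to C$ with $C$ a big CM $S$-algebra fitting in a square with $R\to S$. Writing $\ker(R\to S)$ as contained in some height-one prime (or inducting on the dimension of $R/\ker(R\to S)$, peeling off one height-one prime at a time), repeated application of Theorem \ref{theorem--main1} produces a big Cohen-Macaulay $S$-algebra $C$ together with an $A$-algebra map $B\to C$ compatible with $R\to S$. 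Then $C$ is faithfully flat over $A$ as well, and the commutative diagram
\[\xymatrix{
\Tor^A_i(M,R)\ar[r]\ar[d] & \Tor^A_i(M,S)\ar[d] \\
\Tor^A_i(M,B)=0\ar[r] & \Tor^A_i(M,C)=0
}\]
forces the right vertical map's precomposition, i.e. the map $\Tor^A_i(M,R)\to\Tor^A_i(M,S)\to\Tor^A_i(M,C)$, to be zero; but $\Tor^A_i(M,S)\to\Tor^A_i(M,C)$ is \emph{injective} because $C$ is faithfully flat over $S$ (a big CM $S$-algebra is faithfully flat over the regular ring $S$), hence $\Tor^A_i(M,R)\to\Tor^A_i(M,S)$ is zero, as desired.

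**The main obstacle** is the reduction of $\ker(R\to S)$ to a situation governed by Theorem \ref{theorem--main1}: that theorem handles only a single height-one prime $Q$ with both $R$ and $R/Q$ of mixed characteristic, whereas $\ker(R\to S)$ may have large height and $R/\ker(R\to S)=S$ (or a quotient thereof) might a priori have equal characteristic $p$. To handle the height issue I would induct, choosing at each stage a height-one prime $Q_j\supseteq \ker(R_{j-1}\to S)$ in the normalization $R_{j-1}$ of the current ring, applying Theorem \ref{theorem--main1}, and passing to $R_j=(R_{j-1}/Q_j)^{\mathrm{norm}}$; the mixed-characteristic hypothesis on each intermediate quotient needs to be arranged, which one can do by choosing the height-one primes to avoid the situation where $p$ becomes a zerodivisor — this is possible precisely because $S$ has mixed characteristic, so $p$ is a nonzerodivisor all the way down and every intermediate quotient surjecting onto $S$ inherits mixed characteristic after passing to the relevant minimal prime. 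The compatibility of the resulting tower of big CM algebras $B=B_0\to B_1\to\cdots\to B_n=C$ with the ring maps is exactly the weakly functorial statement, applied iteratively, and this is the new ingredient that makes the argument go through; once the tower is in place, the Tor computation above is formal.
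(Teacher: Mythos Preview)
Your overall strategy and the final diagram chase are exactly what the paper does: once one has a commutative square
\[\xymatrix{
R \ar[r]\ar[d] & S \ar[d]\\
B \ar[r] & C
}\]
with $B$ big Cohen--Macaulay over $R$ (hence flat over the regular ring $A$) and $C$ big Cohen--Macaulay over the regular ring $S$ (hence faithfully flat over $S$), the vanishing of $\Tor^A_i(M,R)\to\Tor^A_i(M,S)$ follows immediately. The reductions to $A$, $R$, $S$ complete, $R$ a complete local domain, and $k$ algebraically closed are also the same.

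The gap is in your ``peeling off one height-one prime at a time'' step. Theorem~\ref{theorem--main1} produces the pair $(B,C)$ \emph{simultaneously}: given $R_{j-1}\to R_j=R_{j-1}/Q_j$ it outputs some $B_{j-1}$ for $R_{j-1}$ and some $C_j$ for $R_j$ with $B_{j-1}\to C_j$. Applying it again to $R_j\to R_{j+1}$ gives a \emph{new} big Cohen--Macaulay algebra $B_j$ for $R_j$ together with $C_{j+1}$ and a map $B_j\to C_{j+1}$; but there is no map $C_j\to B_j$ (both are big Cohen--Macaulay over $R_j$, yet the theorem gives no relation between them), so the tower $B_0\to B_1\to\cdots\to C$ you describe does not exist. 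Equivalently, the weak functoriality here is not of the form ``given $B$, produce $C$'': you cannot feed the output of one application in as the input of the next. Trying to rescue the argument at the level of Tor fails for the same reason: the intermediate rings $R_j$ are not regular, so $\Tor_i^A(M,R_j)\to\Tor_i^A(M,C_j)$ is not injective and nothing propagates.

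The paper avoids iteration entirely by a reduction you are missing (Lemma~\ref{lemma--Ranganathan}): using the extended Rees ring $\widetilde{A}=A[Pt,t^{-1}]$ of the kernel $P=\ker(A\to S)$, one replaces $A\to R\to S$ by $\widetilde{A}\to\widetilde{R}\to\widetilde{S}=\widetilde{A}/t^{-1}\widetilde{A}$, so that the new $\widetilde{S}$ is obtained from the regular ring $\widetilde{A}$ by killing a \emph{single} regular parameter. After this reduction $\ker(R\to S)$ is already a height-one prime, and a single application of Theorem~\ref{theorem--main1} suffices. This Rees-ring trick (due to Ranganathan and Hochster) is the missing idea in your argument.
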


As a consequence of Theorem \ref{theorem--main2}, we prove the following, which is the mixed characteristic analog of Boutot's theorem \cite{BoutotRationalsingularitiesandquotientsbyreductivegroups}.\footnote{This corollary can be also proved by combining \cite[Remarque 4.2.1]{AndreDirectsummandconjecture} and \cite[Theorem 1.2]{BhattDirectsummandandDerivedvariant} (and an extra small argument), see Remark \ref{remark--Boutot also follows from Andre and Bhatt}. However, to the best of our knowledge, the results of \cite{AndreDirectsummandconjecture}, \cite{AndrePerfectoidAbhyankarLemma} and \cite{BhattDirectsummandandDerivedvariant} are not enough to establish Theorem \ref{theorem--main2}.} %To the best of our knowledge, in mixed characteristic, this result was not known before even when $\dim R=2$.

\begin{corollary}[=Corollary \ref{corollary--mixed char Boutot}]
\label{corollary--main}
If $R\to S$ is a ring extension such that $S$ is regular and the map is split as a map of $R$-modules, then $R$ is pseudo-rational (in particular Cohen-Macaulay).
\end{corollary}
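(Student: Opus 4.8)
The plan is to deduce Corollary \ref{corollary--main} from Theorem \ref{theorem--main2} by a standard reduction to the local, complete case followed by a trace/splitting argument. First I would recall that pseudo-rationality is a local condition that can be checked after completion, and that both the Cohen-Macaulay property and rationality of singularities descend along faithfully flat maps with regular (or Gorenstein) fibers; so I may assume $(R,\m)$ is a complete local ring with a fixed splitting $R \to S \to R$ of $R$-modules, and after killing a minimal prime in the support of a suitable module and completing, I may further assume $R$ is a complete local domain. The key observation is that a split extension $R \hookrightarrow S$ with $S$ regular forces $R$ to be normal (a direct summand of a normal domain is normal) and, more importantly, makes every ``map of Tor'' statement for $R$ a retract of the corresponding statement for $S$.

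Next I would set up the cohomological criterion for pseudo-rationality in the sense of Lipman--Teissier: given a proper birational morphism $\pi \colon W \to \Spec R$ with $W$ normal, one must show the natural map $H^d_\m(R) \to H^d_{\pi^{-1}(\m)}(W, \O_W)$ is injective, where $d = \dim R$. The strategy is to factor the relevant local cohomology comparison through a module-finite extension and invoke Theorem \ref{theorem--main2} with a carefully chosen tower $A \to R' \to S'$. Concretely, I would choose a Noether normalization $A \subseteq R$ (a complete regular local ring), replace $S$ by a regular local ring through which the splitting factors so that $A \to S$ is a local map of mixed characteristic regular local rings, and use that $R$ is a module-finite torsion-free extension of $A$. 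Then Theorem \ref{theorem--main2} says $\Tor_i^A(M,R) \to \Tor_i^A(M,S)$ is zero for all $i \geq 1$ and all $M$; since $R$ is a direct summand of $S$ as an $R$-module (hence as an $A$-module), this map is also a split injection on the $R$-summand, forcing $\Tor_i^A(M,R) = 0$ for $i \geq 1$, i.e. $R$ is flat — indeed free — over $A$. Freeness over a regular local ring is exactly the Cohen-Macaulay property for $R$ (Auslander--Buchsbaum), which handles the ``in particular Cohen-Macaulay'' clause.

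For the full pseudo-rational statement I would then run the Boutot-style argument in the derived category: writing the structure map $\O_{\Spec R} \to R\pi_* \O_W$ and its composition with the edge map into $H^d$, one shows that the obstruction to injectivity of $H^d_\m(R) \to H^d_{\pi^{-1}(\m)}(W,\O_W)$ lives in a subquotient controlled by maps of Tor of the form appearing in Theorem \ref{theorem--main2} (this is the technical heart of the mixed-characteristic adaptation, essentially the argument behind Hochster--Huneke's deduction of rational singularities in equal characteristic from the vanishing conjecture). The splitting $R \to S \to R$ lets one transport this obstruction to $S$, where it vanishes because $S$ is regular hence pseudo-rational, and then pull back along the splitting to conclude it vanished in $R$ all along.

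The main obstacle I anticipate is precisely this last reduction: identifying the cohomological obstruction to pseudo-rationality with a ``map of Tor'' to which Theorem \ref{theorem--main2} applies, and in particular arranging the intermediate regular ring $S'$ and the normalization $W$ so that the hypotheses of Theorem \ref{theorem--main2} (local map of mixed characteristic regular local rings, module-finite torsion-free extension) are literally met after all the completions and reductions. One has to be careful that passing to a resolution/normalization $W$ does not destroy the module-finiteness over the Noether normalization, and that the almost mathematics hidden inside Theorem \ref{theorem--main2} does not obstruct the honest (non-almost) vanishing needed to conclude injectivity of local cohomology — so I expect a certain amount of care in choosing the right auxiliary rings and in citing the Lipman--Teissier criterion in a form compatible with mixed characteristic. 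Once that dictionary is in place, the splitting does all the remaining work, exactly as in Boutot's original proof.
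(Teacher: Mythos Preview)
Your treatment of the Cohen--Macaulay clause is correct and is exactly the paper's argument: Noether normalization $A\to R$, apply Theorem~\ref{theorem--main2} to $A\to R\to S$, and use that purity (or splitting) makes the map on $\Tor$ injective, hence zero, so $R$ is free over $A$.

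The gap is in the pseudo-rationality step. You correctly sense that the obstruction to injectivity of $H^d_\m(R)\to H^d_E(W,\O_W)$ must be rewritten as a map of $\Tor$ to which Theorem~\ref{theorem--main2} applies, but you have not located the mechanism, and your stated worry---that passing to $W$ might destroy module-finiteness over the Noether normalization---shows you are looking in the wrong place. The scheme $W$ is never made module-finite over anything. Instead one writes $W=\Proj T$ for the Rees algebra $T=R[Jt]$ and uses the Sancho de Salas exact sequence
\[
[H^d_{\m+T_{>0}}(T)]_0 \to H^d_\m(R) \to H^d_E(W,\O_W),
\]
so it suffices to show $H^d_{\m+T_{>0}}(T)\to H^d_\m(R)$ vanishes. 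After localizing $T$ at $\m+T_{>0}$, completing, and killing a minimal prime, one has a surjection $(T,\m)\twoheadrightarrow(R,\m)$ with $T$ a complete local domain of dimension $d+1$. Now choose a \emph{new} Noether normalization $(A,\m_0)\to T$ and apply Theorem~\ref{theorem--main2} to the chain $A\to T\to S$ with $M=H^{d+1}_{\m_0}(A)$; since the \v{C}ech complex on a regular system of parameters is a flat resolution of $H^{d+1}_{\m_0}(A)$, one has $\Tor_1^A(H^{d+1}_{\m_0}(A),N)\cong H^d_\m(N)$ for any $A$-module $N$, and the theorem says the composite $H^d_\m(T)\to H^d_\m(R)\to H^d_\m(S)$ vanishes. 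Purity of $R\to S$ makes the second arrow injective, so the first vanishes, as required. The key point you are missing is that the middle ring in the application of Theorem~\ref{theorem--main2} is $T$, not $R$, and the relevant Noether normalization has dimension $d+1$. Also, your concern about almost mathematics is misplaced: Theorem~\ref{theorem--main2} is an honest vanishing statement, with no almost qualifier.
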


It is well known that Theorem \ref{theorem--main2} implies Theorem \ref{theorem--direct summand} (for example, see \cite{Ranganathanthesis} or \cite[Remark 4.6]{MaThevanishingconjectureformapsofTorandderivedsplinters}). Recently, Bhatt \cite{BhattDirectsummandandDerivedvariant} gave an alternative and shorter proof of Theorem \ref{theorem--direct summand}: instead of using the perfectoid Abhyankar lemma, Bhatt established a quantitative form of Scholze's Hebbarkeitssatz (the Riemann extension theorem) for perfectoid spaces, and the same idea leads to a proof of a derived variant, i.e., the derived direct summand conjecture. We point out that Theorem \ref{theorem--main2} formally implies such derived variant by \cite[Remark 5.12]{MaThevanishingconjectureformapsofTorandderivedsplinters} and hence we recover, and in fact generalize, Bhatt's result (see Remark \ref{remark--derived direct summand}). Furthermore, although the idea is inspired by \cite{BhattDirectsummandandDerivedvariant}, our argument is expositionally independent of \cite{BhattDirectsummandandDerivedvariant}. We avoid the use of Scholze's Hebbarkeitssatz and the vanishing theorems of perfectoid spaces, instead we study the colon ideals of $A_\infty\langle\frac{p^n}{g}\rangle$ in Lemma \ref{lemma-annihilator of almost projective module}.

\begin{remark}
We should point out that, to the best of our knowledge, Hochster-Huneke's vanishing conjecture for maps of Tor is still open if $A$ and $R$ have mixed characteristic but $S$ has equal characteristic $p>0$. This case also implies Theorem \ref{theorem--direct summand} by \cite[(4.4)]{HochsterHunekeApplicationsofbigCMalgebras}. However, the discussion above shows that the mixed characteristic case we proved (i.e., Theorem \ref{theorem--main2}) is enough for almost all applications.
\end{remark}

%\textcolor{blue}{\sout{This paper is organized as follows. In Section 2 we prove a weakly functorial construction of integral perfectoid algebras Lemma \ref{lemma--commutative diagram at the almost level}. In Section 3 we prove Theorem \ref{theorem--main1}. In Section 4 we prove Theorem \ref{theorem--main2} and Corollary \ref{corollary--main}.}

This paper is organized as follows. In Section 2 we demonstrate a weakly functorial construction of integral perfectoid algebras in Lemma \ref{lemma--commutative diagram at the almost level}. Then, in Section 3, we prove Theorem \ref{theorem--main1}, and in Section 4, we prove Theorem \ref{theorem--main2} and Corollary \ref{corollary--main}.

\subsection{Perfectoid algebras} We will freely use the language of perfectoid spaces \cite{ScholzePerfectoidspaces} and almost mathematics \cite{GabberRameroAlmostringtheory}. In this paper we will always work in the following situation: for a perfect field $k$ of characteristic $p>0$, we let $W(k)$ be the ring of Witt vectors with coefficients in $k$. Let $K^\circ$ be the $p$-adic completion of $W(k)[p^{1/p^\infty}]$ and $K=K^\circ[1/p]$. Then $K$ is a {\it perfectoid field} in the sense of \cite{ScholzePerfectoidspaces} with $K^\circ\subseteq K$ its ring of integers.

A {\it perfectoid $K$-algebra} is a Banach $K$-algebra $R$ such that the set of powerbounded elements $R^\circ\subseteq R$ is bounded and the Frobenius is surjective on $R^\circ/p$. A $K^\circ$-algebra $S$ is called {\it integral perfectoid} if it is $p$-adically complete, $p$-torsion free, satisfies $S=S_*$\footnote{$S_*=\{x\in S[1/p]| p^{1/p^k}\cdot x\in S \text{ for all } k \}$. Hence $S$ is almost isomorphic to $S_*$ with respect to $(p^{1/p^\infty})$, thus in practice we will often ignore this distinction since one can always pass to $S_*$ without affecting the issue.} and the Frobenius induces an isomorphism $S/p^{1/p}\to S/p$. These two categories are equivalent to each other \cite[Theorem 5.2]{ScholzePerfectoidspaces} via the functors $R\to R^\circ$ and $S\to S[1/p]$.

Unless otherwise stated, almost mathematics in this paper will always be measured with respect to the ideal $(p^{1/p^\infty})$ in $K^\circ$.

\subsection{Partial algebra modifications}We briefly recall Hochster's partial algebra modifications that plays a crucial rule in the construction of balanced big Cohen-Macaulay algebras. Our definition and usage of these modifications is basically the same as that in \cite[Sections 3 and 4]{HochsterBigCohen-Macaulayalgebrasindimensionthree}. 

Let $(R,\m)$ be a local ring and let $M$ be an $R$-module. We define a {\it partial algebra modification} of $M$ with respect to a system of parameters $x_1,\dots,x_d$ of $R$ to be a map $M\to M'$ obtained as follows: for some integer $s\geq 0$ and relation $x_{s+1}u_{s+1}=\sum_{j=1}^sx_ju_j$, where $u_j\in M$, choose indeterminates $X_1,\dots,X_s$ and an integer $N\geq 1$, let $F=u_{s+1}-\sum_{j=1}^s x_jX_j$ and let $$M'=M[X_1,\dots,X_s]_{\leq N}/F\cdot R[X_1,\dots,X_s]_{\leq N-1},$$ where $M[X_1,\dots,X_s]=M\otimes_RR[X_1,\dots,X_s]$ and thus $M[X_1,\dots,X_s]_{\leq N}$ refers to polynomials of degree at most $N$ (with coefficients in $M$). The definition of $M^{\prime}$ makes sense since $F$ has degree one in $X_j$. It is readily seen that in $M'$, the relation $x_{s+1}u_{s+1}=\sum_{j=1}^sx_ju_j$ is trivialized in the sense that $u_{s+1}$ is contained in $(x_1,\dots,x_s)M'$ by construction. We shall refer to the integer $N$ as the {\it degree bound} of the partial algebra modification. We can then recursively define a sequence of partial algebra modifications of an $R$-module $M$.

Now given a local map of local rings $(R,\m)\to (S,\n)$ we can define a {\it double sequence of partial algebra modifications} of an $R$-module $M$ with respect to $R\to S$, a system of parameters $x_1,\dots,x_d$ of $R$ and a system of parameters $y_1,\dots,y_{d'}$ of $S$ as follows: we first form a sequence of partial algebra modifications of $M$ over $R$ with respect to $x_1,\dots,x_d$, say $M=M_0, M_1,\dots, M_r$, and then we form a sequence of partial algebra modifications $N_0=S\otimes_R M_r, N_1,\dots, N_s$ of $N_0$ over $S$ with respect to $y_1,\dots,y_{d'}$. When $M$ is an $R$-algebra, we call this double sequence {\it bad} if the image of $1\in M$ in $N_s$ is in $\n N_s$.

The following was essentially taken from \cite[Theorem 4.2]{HochsterBigCohen-Macaulayalgebrasindimensionthree}, and is one of the main ingredients in our construction.

\begin{theorem}
\label{theorem--partial algebra modification}
Let $(R,\m)\to (S,\n)$ be a local homomorphism of local rings. Then there exists a commutative diagram:
\[\xymatrix{
R \ar[r]\ar[d] & S\ar[d]\\
B \ar[r] & C
}\]
such that $B$ is a balanced big Cohen-Macaulay algebra for $R$ and $C$ is a balanced big Cohen-Macaulay algebra for $S$ if and only if there is no bad double sequence of partial algebra modifications of $R$ over $R\to S$ with respect to $x_1,\dots,x_d$ of $R$ and $y_1,\dots,y_{d'}$ of $S$.
\end{theorem}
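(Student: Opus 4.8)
The plan is to prove both implications: the forward one by transporting partial algebra modifications forward into $B$ and $C$, and the reverse one by constructing $B$ and $C$ as colimits of transfinite sequences of partial algebra modifications and reading the required properties off from the absence of bad double sequences. This is the weakly functorial refinement of \cite[Theorem 4.2]{HochsterBigCohen-Macaulayalgebrasindimensionthree}, and I would organize the argument accordingly.

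For the forward implication, assume the diagram with balanced big Cohen-Macaulay algebras $B$ and $C$ is given. I would show by induction along a sequence $R=M_0, M_1,\dots, M_r$ of partial algebra modifications over $R$ with respect to $x_1,\dots,x_d$ that it maps compatibly to $B$: given a map $M_i\to B$ and a relation $x_{s+1}u_{s+1}=\sum_{j=1}^s x_ju_j$ in $M_i$ defining $M_{i+1}$, that relation holds in $B$, and since $x_1,\dots,x_d$ is a regular sequence on $B$ we may write $u_{s+1}=\sum_{j=1}^s x_j b_j$ in $B$; sending $X_j\mapsto b_j$ then kills $F$ and respects the degree truncation, so the map factors through $M_{i+1}$. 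Starting from the structure map $R\to B$ we obtain $M_r\to B$, and since $R\to B\to C$ agrees with $R\to S\to C$ we get $N_0=S\otimes_R M_r\to C$; propagating through the $S$-modifications $N_0,\dots, N_s$ using that $y_1,\dots,y_{d'}$ is a regular sequence on $C$ yields $N_s\to C$ under which the image of $1\in R$ goes to $1\in C$. As $\n C\neq C$, that image is not in $\n N_s$, so the double sequence is not bad.

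For the reverse implication, assume no bad double sequence exists. First I would build $B$: organizing, by a well-ordering with suitable bookkeeping, all relations on $x_1,\dots,x_d$ that must be trivialized (including those created by earlier modifications, and running through all degree bounds), construct a transfinite chain $R=B_0\to B_1\to\cdots$ of partial algebra modifications over $R$, with colimits at limit stages, whose colimit $B$ is an $R$-algebra on which $x_1,\dots,x_d$ is a regular sequence. Each finite stage $B_n$ is the target of an honest finite sequence of partial algebra modifications of $R$; taking the empty $S$-part ($s=0$) in the hypothesis, and using that $\m S\subseteq\n$ forces the image of $1$ in $S\otimes_R B_n$ to lie in $\n(S\otimes_R B_n)$ whenever it lies in $\m B_n$, we conclude the image of $1$ in $B_n$ is not in $\m B_n$, hence not in $\m B$ since $B$ is a filtered colimit of finite stages. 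I would then run the same construction starting from $S\otimes_R B$ over $S$ with respect to $y_1,\dots,y_{d'}$, producing $C$; the commutative square comes from $R\to S\to S\otimes_R B\to C$ (which equals $R\to B\to S\otimes_R B\to C$) and the bottom arrow from $B\to S\otimes_R B\to C$. Since $S\otimes_R -$ and partial algebra modifications commute with filtered colimits, every finite stage of the tower building $C$ is a filtered colimit of modules $N_s$ arising from genuine finite double sequences, so if $1\in\n C$ it would already lie in some such $\n N_s$ — a bad double sequence, a contradiction. Hence $\n C\neq C$ and $y_1,\dots,y_{d'}$ is a regular sequence on $C$, and the standard passage from a big Cohen-Macaulay algebra with respect to a single system of parameters to a balanced one can be carried out compatibly with $B\to C$.

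The main obstacle is the transfinite bookkeeping in the reverse direction — ensuring that every relation ever produced on the chosen system of parameters is eventually trivialized while keeping the degree bounds under control — together with the formal facts that partial algebra modifications commute with filtered colimits and with the base change $S\otimes_R -$, since these are precisely what let one recognize a finite stage of the $C$-tower as a colimit of honest finite double sequences. All of this is the content of \cite[Sections 3 and 4]{HochsterBigCohen-Macaulayalgebrasindimensionthree}, so once it is imported both implications go through as sketched.
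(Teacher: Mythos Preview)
Your proposal is correct and follows essentially the same route as the paper: build $B'$ as a direct limit of partial algebra modifications of $R$, then $C'$ as a direct limit of modifications of $S\otimes_R B'$, and read off $\m B'\neq B'$ and $\n C'\neq C'$ from the absence of bad double sequences. The only point where the paper is more specific is the final ``balanced'' step: rather than an unspecified ``standard passage,'' the paper takes $B=\widehat{B'}^{\m}$ and $C=\widehat{C'}^{\n}$ and invokes \cite[Corollary 8.5.3]{BrunsHerzogCohenMacaulayrings}, which is automatically compatible with $B'\to C'$ since $R\to S$ is local.
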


%\textcolor{blue}{\sout{Compared with citation, the only difference is that we specify the system of parameters of $R$ and $S$. But the idea of the proof is the same: one first constructs $B'$ as a direct limit of finite sequences of modifications of $R$ and then constructs $C'$ as a direct limit of finite sequences of modifications of $S\otimes_RB$ over $S$. It is readily seen that $x_1,\dots,x_d$ and $y_1,\dots,y_{d'}$ are improper-regular sequences on $B'$ and $C'$ respectively. To guarantee that $\m B'\neq B'$ and $\n C'\neq C'$ one needs precisely that there is no bad double sequence of partial algebra modifications over $R\to S$. Once we have $B'\to C'$, $B=\widehat{B'}^\m\to C=\widehat{C'}^\n$ is a map of balanced big Cohen-Macaulay algebras of $R\to S$ citation.}

This theorem is actually a bit stronger than \cite[Theorem 4.2]{HochsterBigCohen-Macaulayalgebrasindimensionthree}.  Whereas Hochster allows the system of parameters to vary throughout the double sequence, we fix a system of parameters of $R$ and $S$. But the idea of the proof is the same: one first constructs $B'$ as a direct limit of finite sequences of modifications of $R$ and then constructs $C'$ as a direct limit of finite sequences of modifications of $S\otimes_RB$ over $S$. It is readily seen that $x_1,\dots,x_d$ and $y_1,\dots,y_{d'}$ are improper-regular sequences on $B'$ and $C'$ respectively. To guarantee that $\m B'\neq B'$ and $\n C'\neq C'$ one needs precisely that there is no bad double sequence of partial algebra modifications over $R\to S$.
Now $B^{\prime}$  and $C^{\prime}$ are not balanced, but that problem is easily remedied.
We invoke \cite[Corollary 8.5.3]{BrunsHerzogCohenMacaulayrings} to note that $B'\to C'$ induces $B=\widehat{B'}^\m\to C=\widehat{C'}^\n$, a map of balanced big Cohen-Macaulay algebras of $R\to S$ .

%\textcolor{red}{I don't have the reference handy, but I think the precise theorem number should appear.  It is a big book.  I think there are other places where we should be more specific. I'm not familiar with improper-regular.}

\subsection*{Acknowledgement} It is a pleasure to thank Mel Hochster for many enjoyable discussions on the vanishing conjecture for maps of Tor and other homological conjectures. We would like to thank Bhargav Bhatt for explaining the basic theory of perfectoid spaces to us and for pointing out Remark \ref{remark--Bhatt}. We would also like to thank Kiran Kedlaya, Karl Schwede, Kazuma Shimomoto and Chris Skalit for very helpful discussions. The second author is partially supported by NSF Grant DMS \#1600198, and NSF CAREER Grant DMS \#1252860/1501102.

\section{Weakly functorial construction of integral perfectoid algebras}

\noindent\textbf{Notations}: Throughout this section, $(A,\m,k)$ will always be a complete and unramified regular local ring of mixed characteristic with $k$ perfect, i.e., $A\cong W(k)[[x_1,\dots,x_{d-1}]]$ where $W(k)$ is the ring of Witt vectors with coefficients in $k$. Let $K^\circ$ be the $p$-adic completion of $W(k)[p^{1/p^\infty}]$ and $K=K^\circ[1/p]$. Let $A_{\infty,0}$ be the $p$-adic completion of $A[p^{1/p^\infty},x_1^{1/p^\infty},\dots,x_{d-1}^{1/p^\infty}]$, which is an integral perfectoid $K^\circ$-algebra.

For any nonzero element $g\in A$, we let $A_{\infty,0}\to A_\infty$ be Andr\'{e}'s construction of integral perfectoid $K^\circ$-algebras (for example see \cite[Theorem 1.4]{BhattDirectsummandandDerivedvariant}): $A_\infty$ is almost faithfully flat over $A_{\infty,0}$ modulo $p$ such that $g$ admits a compatible system of $p^k$-th roots in $A_\infty$. $A_\infty\langle\frac{p^n}{g}\rangle$ will denote the integral perfectoid $K^\circ$-algebra, which is the ring of bounded functions on the rational subset $\{x\in X| |p^n|\leq |g(x)|\}$ where $X=\Spa(A_\infty[1/p], A_\infty)$ is the perfectoid space associated to $A_\infty$. Since $g$ admits a compatible system of $p^k$-th roots in $A_\infty$, $A_\infty\langle\frac{p^n}{g}\rangle$ can be described almost explicitly as the $p$-adic completion of $A_\infty[(\frac{p^n}{g})^{\frac{1}{p^\infty}}]$ \cite[Lemma 6.4]{ScholzePerfectoidspaces}.

We begin by observing the following:

\begin{lemma}
\label{lemma--functorial property of Andre's construction}
Suppose $g\neq 0$ in $A/x_1A$. Then we have a natural map $A_\infty\to (A/x_1A)_\infty$ sending $g^{1/p^k}$ to $\overline{g}^{1/p^k}$.
\end{lemma}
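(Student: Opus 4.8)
The strategy is to unwind Andr\'e's construction for $A$ and for $A/x_1A$ in parallel and check that the first can be mapped to the second. Write $\overline A = A/x_1A$, which is again a complete unramified regular local ring of mixed characteristic with perfect residue field $k$ (of dimension $d-1$). First I would record the base case: there is a natural surjection $A_{\infty,0}\to \overline A_{\infty,0}$. Indeed, $A_{\infty,0}$ is the $p$-adic completion of $A[p^{1/p^\infty}, x_1^{1/p^\infty},\dots, x_{d-1}^{1/p^\infty}]$ and $\overline A_{\infty,0}$ is the $p$-adic completion of $\overline A[p^{1/p^\infty}, x_2^{1/p^\infty},\dots, x_{d-1}^{1/p^\infty}]$; the obvious map sending $x_1^{1/p^k}\mapsto 0$ and the other generators to their images is a well-defined ring homomorphism before completion, hence after $p$-adic completion (one checks it is $p$-adically continuous, which is immediate since it sends the defining generators into the target). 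Since $g\neq 0$ in $\overline A$, the element $g\in A_{\infty,0}$ maps to the nonzero element $\overline g\in\overline A_{\infty,0}$, so it makes sense to ask for $p^k$-th roots of $\overline g$.

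Next I would recall how $A_\infty$ is built from $A_{\infty,0}$: by \cite[Theorem 1.4]{BhattDirectsummandandDerivedvariant}, $A_\infty$ is obtained as a ($p$-adically completed) colimit of integral perfectoid $A_{\infty,0}$-algebras, each step adjoining $p^k$-th roots of $g$ (and of finitely many auxiliary elements produced by Andr\'e's almost \'etale extensions), in a way that the construction is essentially canonical once one fixes the data $(A_{\infty,0}, g)$. Likewise $\overline A_\infty$ is built from $(\overline A_{\infty,0}, \overline g)$ by the same recipe. The plan is to lift the map $A_{\infty,0}\to\overline A_{\infty,0}$ through each stage of the construction: at the stage where one adjoins $(g)^{1/p^k}$, send it to $(\overline g)^{1/p^k}$; at the stages where Andr\'e's procedure passes to a finite almost \'etale extension to make certain almost zero classes genuinely zero, use the functoriality of that procedure (the almost \'etale extension in the target can be chosen to receive the image of the one in the source, or equivalently one applies the universal property after base change along $A_{\infty,0}\to\overline A_{\infty,0}$). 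Taking the ($p$-adically completed) colimit yields the desired $A_\infty\to\overline A_\infty$, which by construction sends $g^{1/p^k}$ to $\overline g^{1/p^k}$.

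The main obstacle is the middle step: verifying that Andr\'e's construction is functorial enough that each almost \'etale enlargement used to produce $A_\infty$ can be propagated compatibly to the construction of $\overline A_\infty$. Concretely, one must check that base changing along $A_{\infty,0}\to\overline A_{\infty,0}$ an almost \'etale $A_{\infty,0}$-algebra that kills a given class still kills the image of that class, and that the target of Andr\'e's construction for $\overline A$ dominates this base change; this is where one relies on the explicit inductive nature of the construction in \cite[Theorem 1.4]{BhattDirectsummandandDerivedvariant} rather than just its output properties. Two smaller technical points to dispatch along the way: that each map before $p$-adic completion is $p$-adically continuous so that it extends to the completions, and that $p$-torsion-freeness and the condition $S=S_*$ are preserved (or can be arranged by passing to $(-)_*$, which does not affect the stated map since everything is considered up to $(p^{1/p^\infty})$-almost isomorphism). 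None of these requires new ideas; they are bookkeeping on top of the construction already cited.
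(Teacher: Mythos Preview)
Your approach diverges from the paper's, and the step you flag as the ``main obstacle'' is a genuine gap.

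The paper's proof is much shorter because it uses a different description of $A_\infty$ than the one you attempt to unwind. In the paper (and in the version of Andr\'e's construction cited, namely \cite[Theorem 1.4]{BhattDirectsummandandDerivedvariant}), $A_\infty$ is not built as an inductive limit of almost \'etale enlargements; it is the ring of functions on the Zariski closed subset of $\Spa(A_{\infty,0}\langle T^{1/p^\infty}\rangle[1/p], A_{\infty,0}\langle T^{1/p^\infty}\rangle)$ cut out by $T-g$. With this description the lemma is immediate: one writes down the map $A_{\infty,0}\langle T^{1/p^\infty}\rangle \to (A/x_1A)_\infty$ sending $T^{1/p^k}\mapsto \overline g^{1/p^k}$ (using the map $A_{\infty,0}\to (A/x_1A)_{\infty,0}\to (A/x_1A)_\infty$ you already identified), observes that $T-g$ maps to $0$, and invokes the universal property of the Zariski closed quotient to factor through $A_\infty$.

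By contrast, your plan assumes an iterative construction with auxiliary almost \'etale steps and then asserts these steps are functorial in the base. You yourself identify this as the crux and do not prove it: phrases like ``the almost \'etale extension in the target can be chosen to receive the image of the one in the source'' and ``one applies the universal property after base change'' are not justified, because the almost \'etale extensions in Andr\'e's original argument involve choices and there is no evident universal property to invoke. Moreover, your appeal to ``the explicit inductive nature of the construction in \cite[Theorem 1.4]{BhattDirectsummandandDerivedvariant}'' mischaracterizes that reference; Bhatt's construction there is precisely the Zariski closed one, not an inductive almost \'etale tower. So as written, your middle step does not go through, and the clean fix is to switch to the Zariski closed description, after which the argument becomes a one-line factorization.
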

\begin{proof}
We first note that there are natural maps $$A_{\infty, 0}\to (A/x_1A)_{\infty, 0}\to (A/x_1A)_\infty$$ where the first map is simply obtained by killing $x_1^{1/p^\infty}$. Thus we have a map $$A_{\infty,0}\langle T^{1/p^\infty}\rangle \to (A/x_1A)_\infty$$ of integral perfectoid $K^\circ$-algebras sending $T^{1/p^k}$ to $\overline{g}^{1/p^k}$. Since $A_\infty$ is the ring of functions on the Zariski closed subset of $Y=\Spa(A_{\infty,0}\langle T^{1/p^\infty}\rangle[1/p], A_{\infty,0}\langle T^{1/p^\infty}\rangle)$ defined by $T-g$, the map $A_{\infty,0}\langle T^{1/p^\infty}\rangle \to (A/x_1A)_\infty$ induces a map $A_\infty\to (A/x_1A)_\infty$ sending $g^{1/p^k}$ to $\overline{g}^{1/p^k}$.
\end{proof}

\begin{lemma}
\label{lemma--choosing coordinates}
Let $(R,\m,k)$ be a complete normal local domain with $k$ perfect, and let $Q\subseteq R$ be a height one prime ideal. Suppose both $R$ and $R/Q$ have mixed characteristic. Then we can find a complete and unramified regular local ring $A\cong W(k)[[x_1,\dots,x_{d-1}]]$ with $A\to R$ a module-finite extension such that:
\begin{enumerate}
\item $Q\cap A=(x_1)$;
\item $A_{(x_1)}\to R_Q$ is essentially \'{e}tale.
\end{enumerate}
\end{lemma}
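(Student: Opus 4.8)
The plan is to produce the regular local ring $A$ by a careful application of the Cohen structure theorem, choosing the regular system of parameters so that its first element generates $Q \cap A$ while the remaining elements restrict to a separating transcendence basis mod $Q$. First I would reduce to finding the right system of parameters. Since $R$ is a complete normal local domain of mixed characteristic with perfect residue field $k$, the prime $p$ is part of a system of parameters, and $R/Q$ has mixed characteristic means $p \in Q$ is false, i.e. $p \notin Q$, so $p$ is a nonzerodivisor mod $Q$. Now $Q$ has height one in the normal (hence $R_Q$-regular, in fact a DVR) local domain $R$, so $R/Q$ has dimension $d - 1$ where $d = \dim R$. I would pick any element $x_1 \in Q$ generating $Q R_Q$ (possible since $R_Q$ is a DVR); then $x_1$ is part of a system of parameters of $R$ because $R/x_1 R$ has dimension $d-1$ (as $Q$ is a minimal prime over $(x_1)$).

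The key construction is then: choose $x_2, \dots, x_{d-1} \in R$ whose images in $R/Q$ form, together with $p$, a system of parameters of $R/Q$, and moreover such that the images of $x_2, \dots, x_{d-1}$ in the fraction field $\kappa(Q) = R_Q/QR_Q$ form a separating transcendence basis of $\kappa(Q)$ over the fraction field $W(k)[1/p]$ of $W(k)$ — equivalently, so that $W(k)[[x_2, \dots, x_{d-1}]] \hookrightarrow R/Q$ exhibits $R/Q$ as a module-finite extension with $\Frac(R/Q)$ separable over $\Frac(W(k)[[x_2,\dots,x_{d-1}]])$. This is possible: $\kappa(Q)$ is a finitely generated field extension of the characteristic-zero field $W(k)[1/p]$, hence separably generated, and one can lift a separating transcendence basis to elements of $R/Q$ that are also parameters (a general-position argument / prime avoidance to keep them a system of parameters, then adjust by units/higher-order terms to land on an honest separating basis). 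Set $A_0 = W(k)[[x_1, x_2, \dots, x_{d-1}]] \subseteq R$; the Cohen structure theorem / the theory of complete local rings shows $A_0$ is a complete unramified regular local ring of dimension $d$ and $A_0 \to R$ is module-finite (because $p, x_1, \dots, x_{d-1}$ is a system of parameters of $R$ and $R$ is complete). Rename $A_0$ as $A$.

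Now I verify the two properties. For (1): by construction $x_1 \in Q \cap A$, so $(x_1) \subseteq Q \cap A$; conversely $Q \cap A$ is a height-one prime of the regular (hence normal) ring $A$ containing the prime element $x_1$, so $Q \cap A = (x_1)$. For (2): we must show $A_{(x_1)} \to R_Q$ is essentially étale, i.e. $R_Q$ is flat and unramified over $A_{(x_1)}$. Flatness is automatic since $A_{(x_1)}$ is a DVR (a one-dimensional regular local ring) and $R_Q$ is a torsion-free — indeed a DVR — extension of it. For unramifiedness: $x_1$ is a uniformizer of the DVR $A_{(x_1)}$ and, since $x_1$ generates $QR_Q$, it is a uniformizer of $R_Q$ as well, so the ramification index is $1$; and the residue field extension $\kappa(Q)/\kappa((x_1))$ is precisely $\Frac(R/Q)$ over $\Frac(A/x_1 A) = \Frac(W(k)[[x_2,\dots,x_{d-1}]])$, which we arranged to be finite separable. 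A finite extension of DVRs with ramification index one and separable residue field extension is étale at the closed point, so $A_{(x_1)} \to R_Q$ is essentially étale.

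The main obstacle is the simultaneous control required of $x_2, \dots, x_{d-1}$: they must be a system of parameters of $R$ lying in a prescribed relation to $Q$ (so that $A \to R$ is module-finite and $Q \cap A = (x_1)$) while their images mod $Q$ form a \emph{separating} transcendence basis (so that (2) gives étale, not merely unramified-up-to-inseparability). In mixed characteristic the relevant residue field $\kappa(Q)$ has characteristic zero, which is what makes separable generation free; but one still needs a prime-avoidance argument to choose lifts that remain parameters, plus possibly a perturbation by elements of $Q^2$ or by units to turn an approximate separating basis into an exact one. I would carry this out by first choosing the $x_i$ mod $Q$ as a separating transcendence basis in $R/Q$ that is also a system of parameters there (standard for complete local domains over coefficient rings, via Cohen's theorem applied to $R/Q$), and then lifting arbitrarily to $R$; the lifted $p, x_1, \dots, x_{d-1}$ automatically form a system of parameters of $R$ because modulo $x_1$ (after passing to $R/Q$, using that $Q$ is the unique minimal prime of $(x_1)$ of the right dimension, plus the normality of $R$ to control the other components) the dimension drops correctly.
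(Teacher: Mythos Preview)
Your approach is the paper's in outline --- pick $x_1$ generating $QR_Q$, then invoke Cohen's structure theorem --- but you overcomplicate one step and leave a real gap in another.

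The overcomplication: you do not need $x_2,\dots,x_{d-1}$ to restrict to a separating transcendence basis. Since $p\notin (x_1)$ in $A$, the residue field $\kappa((x_1))=\Frac(A/x_1A)$ already has characteristic~$0$, so \emph{any} finite extension of it is separable. The paper simply observes that both residue fields have characteristic~$0$ (because $p$ is inverted upon localizing at $(x_1)$); once $x_1$ generates $QR_Q$, the map $A_{(x_1)}\to R_Q$ is unramified regardless of how the remaining coordinates were chosen. All of your ``separating basis'' bookkeeping can be deleted.

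The gap: your argument that $p,x_1,\dots,x_{d-1}$ is a system of parameters of $R$ does not work. You appeal to ``$Q$ is the unique minimal prime of $(x_1)$ of the right dimension,'' but $R$ is normal, so \emph{every} minimal prime of $(x_1)$ has height one; knowing that $p,\bar x_2,\dots,\bar x_{d-1}$ is a system of parameters of $R/Q$ says nothing about the other minimal primes of $(x_1)$. Worse, your ``any element of $Q$ generating $QR_Q$'' may lie in a minimal prime $P_i$ of $(p)$, in which case $p,x_1$ is not even part of a system of parameters. The paper handles this at the outset by prime avoidance: since $p\notin Q$ forces $Q\not\subseteq P_i$ for all $i$, one can choose $x\in Q\setminus\bigl(Q^{(2)}\cup\bigcup_iP_i\bigr)$; then $x$ generates $QR_Q$ \emph{and} $p,x$ is part of a system of parameters, so Cohen's structure theorem directly furnishes $A\cong W(k)[[x_1,\dots,x_{d-1}]]\to R$ module-finite with $x_1\mapsto x$, with no further conditions on $x_2,\dots,x_{d-1}$ needed.
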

\begin{proof}
Let $\{P_i\}$ be all the minimal primes of $(p)$; they all have height one. Since $R/Q$ has mixed characteristic, $p\notin Q$. Thus $Q$ is not contained in any of the $P_i$. By prime avoidance we can choose $x\in Q$ that is not in $(\cup_iP_i)\cup Q^{(2)}$.  Thus the image of $x$ in $R_Q$ generates $QR_Q$ since $R$ is normal, and $p, x$ is part of a system of parameters of $R$.

By Cohen's structure theorem, there exists a complete and unramified regular local ring $A\cong W(k)[[x_1,\dots,x_{d-1}]]$ and a module-finite extension $A\to R$ such that the image of $x_1$ in $R$ is $x$. It is clear that $Q\cap A=(x_1)$ because $Q\cap A$ is a height one prime of $A$ that contains $(x_1)$, so it must be $(x_1)$. To see $A_{(x_1)}\to R_Q$ is essentially \'{e}tale, note that the image of $x_1$, $x$, generates the maximal ideal $QR_Q$ of $R_Q$ and the extension of residue fields $A_{(x_1)}/(x_1)A_{(x_1)}\to R_Q/QR_Q$ is finite separable since both fields have characteristic $0$ ($p$ is inverted when we localize). Thus $A_{(x_1)}\to R_Q$ is unramified. But it is clearly flat because $R_Q$ is $x_1$-torsion free. Therefore $A_{(x_1)}\to R_Q$ is essentially \'{e}tale.
\end{proof}

The following is the main result of this section. It is crucial in proving the version of weakly functorial balanced big Cohen-Macaulay algebras that we need.

\begin{lemma}
\label{lemma--commutative diagram at the almost level}
Let $(R,\m,k)$ be a complete normal local domain with $k$ perfect, and let $Q\subseteq R$ be a height one prime ideal. Suppose both $R$ and $R/Q$ have mixed characteristic. We pick $A\cong W(k)[[x_1,\dots,x_{d-1}]]$ such that $A\to R$ is a module-finite extension satisfying the conclusion of Lemma \ref{lemma--choosing coordinates}.
Then there exists an element $g\in A$, whose image is nonzero in $A/x_1A$, such that $A_g\to R_g$ and $(A/x_1A)_g\to (R/Q)_g$ are both finite \'{e}tale. Furthermore, for every $n>0$, we have a commutative diagram:
\[\xymatrix{
R \ar[r] \ar[d] & R/Q \ar[d] \\
R_{\infty,n} \ar[r] & (R/Q)_{\infty,n}
}
\]
where $R_{\infty,n}$ (resp., $(R/Q)_{\infty,n}$) is an integral perfectoid $K^\circ$-algebra that is almost finite \'{e}tale over $A_\infty\langle\frac{p^n}{g}\rangle$ (resp., $(A/x_1A)_\infty\langle\frac{p^n}{g}\rangle$).
\end{lemma}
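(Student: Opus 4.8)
The plan is to first produce the element $g$, then build the perfectoid algebras upstairs, and finally check the compatibility (commutativity of the square) using the functoriality from Lemma \ref{lemma--functorial property of Andre's construction}. For the existence of $g$: by Lemma \ref{lemma--choosing coordinates} the map $A\to R$ is module-finite with $A$ regular, so it is generically \'etale; concretely, inverting the discriminant of $A\to R$ makes $A_g\to R_g$ finite \'etale. Similarly $A/x_1A \to R/Q$ is module-finite (since $Q\cap A=(x_1)$ and $R/Q$ is a quotient of $R/x_1R$ which is module-finite over $A/x_1A$ — one has to be a touch careful here because $R/x_1R$ need not be a domain, but after inverting a suitable element it becomes reduced, and then we can invert its discriminant over $A/x_1A$ as well, possibly after also inverting some element to isolate the $R/Q$ component). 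We take $g\in A$ so that its image in $A/x_1A$ is nonzero and both $A_g\to R_g$ and $(A/x_1A)_g\to (R/Q)_g$ are finite \'etale; shrinking further we may also assume $g\notin x_1 A$ as required. This is essentially the construction of $g$ in the literature on almost \'etale extensions (Faltings, Gabber--Ramero) combined with the specific geometry set up in Lemma \ref{lemma--choosing coordinates}.

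Next, the construction of $R_{\infty,n}$. Fix $n>0$. By Andr\'e's construction we have the integral perfectoid $K^\circ$-algebra $A_\infty$ in which $g$ has a compatible system of $p^k$-th roots, and the perfectoid localization $A_\infty\langle\frac{p^n}{g}\rangle$. On $\Spa(A_\infty[1/p], A_\infty)$ the element $g$ becomes invertible on the rational subset $\{|p^n|\le |g|\}$, so base-changing the finite \'etale cover $\Spec R_g\to \Spec A_g$ along $A_g\to A_\infty\langle\frac{p^n}{g}\rangle[1/p]$ gives a finite \'etale $A_\infty\langle\frac{p^n}{g}\rangle[1/p]$-algebra, and by the almost purity theorem (for perfectoid algebras, as in \cite[Theorem 7.9]{ScholzePerfectoidspaces}) its integral closure $R_{\infty,n}$ is an integral perfectoid $K^\circ$-algebra that is almost finite \'etale over $A_\infty\langle\frac{p^n}{g}\rangle$; moreover by construction it receives a map from $R$ (via $R\to R_g \to R_g\otimes_{A_g} A_\infty\langle\frac{p^n}{g}\rangle[1/p]\to R_{\infty,n}$, using that $R_{\infty,n}$ is integrally closed and $R$ is integral over $A$). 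Similarly, using Lemma \ref{lemma--functorial property of Andre's construction} to get the map $A_\infty\to (A/x_1A)_\infty$ (note $g\neq 0$ in $A/x_1A$ by our choice, so the lemma applies), and hence a map $A_\infty\langle\frac{p^n}{g}\rangle\to (A/x_1A)_\infty\langle\frac{p^n}{g}\rangle$ of perfectoid algebras, we define $(R/Q)_{\infty,n}$ as the integral closure of $(R/Q)_g\otimes_{(A/x_1A)_g}(A/x_1A)_\infty\langle\frac{p^n}{g}\rangle[1/p]$, again almost finite \'etale over $(A/x_1A)_\infty\langle\frac{p^n}{g}\rangle$ by almost purity, and receiving a map from $R/Q$.

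Finally, commutativity of the square. The outer square $R\to R/Q$, $R\to R_{\infty,n}$, $R/Q\to (R/Q)_{\infty,n}$ must be completed by a map $R_{\infty,n}\to (R/Q)_{\infty,n}$ making everything commute. The key point is that all four algebras are (almost) finite \'etale over the perfectoid base pair $A_\infty\langle\frac{p^n}{g}\rangle \to (A/x_1A)_\infty\langle\frac{p^n}{g}\rangle$, and that after inverting $p$ and $g$ the desired map is forced: $R_{\infty,n}[1/p] = R_g\otimes_{A_g} A_\infty\langle\frac{p^n}{g}\rangle[1/p]$ maps to $(R/Q)_g\otimes_{(A/x_1A)_g}(A/x_1A)_\infty\langle\frac{p^n}{g}\rangle[1/p] = (R/Q)_{\infty,n}[1/p]$ because the square $A_g\to A_\infty\langle\frac{p^n}{g}\rangle[1/p]$, $A_g\to (A/x_1A)_g \to (A/x_1A)_\infty\langle\frac{p^n}{g}\rangle[1/p]$ commutes (this is where we use that $g^{1/p^k}\mapsto \overline g^{1/p^k}$ in Lemma \ref{lemma--functorial property of Andre's construction}, so the two perfectoid bases are compatibly built and the finite \'etale covers are compatibly base-changed — concretely $R/Q$ is $R/(x_1)$ localized, and $\Spec(R/Q)_g \subseteq \Spec R_g$ is cut out by $x_1$, which maps to $0$ downstairs). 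Then one checks this map on the generic fibers carries $R_{\infty,n}$ into the integral closure $(R/Q)_{\infty,n}$ (integrality is preserved by ring maps), so it restricts to $R_{\infty,n}\to (R/Q)_{\infty,n}$; and it is compatible with the maps from $R$ and $R/Q$ since those are defined through the same generic-fiber formulas. The main obstacle I anticipate is the bookkeeping in the first step: carefully choosing a single $g\in A$ that simultaneously trivializes the ramification of $A\to R$ and of $A/x_1A\to R/Q$ while staying outside $x_1 A$ — in particular handling the fact that $R/x_1R$ may be neither reduced nor irreducible, so that $(R/Q)_g$ must be pinned down as a well-defined component — and then tracking that the various localizations and perfectoid completions fit into one commuting diagram rather than merely commuting "up to almost isomorphism."
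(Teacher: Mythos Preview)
Your proposal is correct and follows essentially the same route as the paper. Two small points where the paper is slightly cleaner and worth adopting: first, the choice of $g$ is handled by taking the discriminant of $A\to R$ and observing that condition (2) of Lemma~\ref{lemma--choosing coordinates} (that $A_{(x_1)}\to R_Q$ is essentially \'etale) forces $g\notin x_1A$ immediately, and since $x_1$ generates $QR_Q$ one has $(R/x_1R)_g=(R/Q)_g$ after enlarging $g$ --- this dissolves your worry about $R/x_1R$ being non-reduced. Second, rather than taking an ``integral closure'' the paper defines $R_{\infty,n}=(R\otimes_A A_\infty\langle p^n/g\rangle)[1/p]^\circ$ and similarly for $(R/Q)_{\infty,n}$; since $(-)[1/p]^\circ$ is a functor on perfectoid $K$-algebras, the map $R_{\infty,n}\to (R/Q)_{\infty,n}$ and the commutativity of the square come for free, with no separate integrality check needed.
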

\begin{proof}
Let $g\in A$ be the discriminant of the map $A\to R$; i.e., it defines the locus of $\Spec A$ such that the map $A\to R$ is not essentially \'{e}tale when localizing. Since $A_{(x_1)}\to R_Q$ is essentially \'{e}tale, $g$ is nonzero in $A/x_1A$. Since $x_1$ generates $Q$ when localizing at $Q$ and we know that $A_g\to R_g$ and hence $(A/x_1A)_g\to (R/x_1R)_g$ are finite \'{e}tale, replacing $g$ by a multiple we have $A_g\to R_g$ and $(A/x_1A)_g\to (R/Q)_g$ are both finite \'{e}tale.
By Lemma \ref{lemma--choosing coordinates} we have a commutative diagram:
\[\xymatrix{
A \ar[r]\ar[d] & {R} \ar[d] \\
{A/x_1A} \ar[r] & R/Q.
}
\]
By Lemma \ref{lemma--functorial property of Andre's construction} we also have a commutative diagram:
\[\xymatrix{
A \ar[r]\ar[d] & A_\infty  \ar[r]\ar[d] & A_\infty\langle\frac{p^n}{g}\rangle \ar[d]\\
A/x_1A \ar[r]  & (A/x_1A)_\infty \ar[r] &  (A/x_1A)_\infty\langle\frac{p^n}{g}\rangle.
}
\]
Tensoring over $A$ we get a natural commutative diagram:
\[\xymatrix{
R \ar[r]\ar[d] & R\otimes A_\infty\langle\frac{p^n}{g}\rangle  \ar[d] \\
R/Q \ar[r] & (R/Q) \otimes (A/x_1A)_\infty\langle\frac{p^n}{g}\rangle.
}
\]
Since $A_g\to R_g$ and $(A/x_1A)_g\to (R/Q)_g$ are both finite \'{e}tale and $g$ divides $p^n$ in $A_\infty\langle\frac{p^n}{g}\rangle$ and $(A/x_1A)_\infty\langle\frac{p^n}{g}\rangle$, we know that $(R\otimes A_\infty\langle\frac{p^n}{g}\rangle)[1/p]$ and $((R/Q) \otimes (A/x_1A)_\infty\langle\frac{p^n}{g}\rangle)[1/p]$ are finite \'{e}tale over $(A_\infty\langle\frac{p^n}{g}\rangle)[1/p]$ and $((A/x_1A)_\infty\langle\frac{p^n}{g}\rangle)[1/p]$ respectively. Therefore $$(R\otimes A_\infty\langle\frac{p^n}{g}\rangle)[1/p]\to ((R/Q) \otimes (A/x_1A)_\infty\langle\frac{p^n}{g}\rangle)[1/p]$$ is a morphism  of perfectoid $K$-algebras; thus it induces a map on the ring of powerbounded elements $$R_{\infty,n}:=(R\otimes A_\infty\langle\frac{p^n}{g}\rangle)[1/p]^\circ\to (R/Q)_{\infty,n}:=((R/Q) \otimes (A/x_1A)_\infty\langle\frac{p^n}{g}\rangle)[1/p]^\circ.$$
The almost purity theorem \cite[Theorem 7.9]{ScholzePerfectoidspaces} implies that $R_{\infty,n}$ and $(R/Q)_{\infty,n}$ are integral perfectoid $K^\circ$-algebras that are almost finite \'{e}tale over $A_\infty\langle\frac{p^n}{g}\rangle$ and $(A/x_1A)_\infty\langle\frac{p^n}{g}\rangle$ respectively. Therefore we have a commutative diagram
\[\xymatrix{
R \ar[r] \ar[d] & R/Q \ar[d] \\
R_{\infty,n} \ar[r] & (R/Q)_{\infty,n}
}
\]
as desired.
\end{proof}

\section{The main result}

In this section we continue to use the notation from the beginning of Section 2. The main theorem we want to prove is the following:

\begin{theorem}
\label{theorem--big CM}
Let $(R,\m,k)$ be a complete local domain with $k$ algebraically closed, and let $Q\subseteq R$ be a height one prime ideal. Suppose both $R$ and $R/Q$ have mixed characteristic. Then there exists a commutative diagram:
\[\xymatrix{
R \ar[r] \ar[d] & R/Q \ar[d] \\
B \ar[r] & C
}
\]
where $B$, $C$ are balanced big Cohen-Macaulay algebras for $R$ and $R/Q$ respectively.
\end{theorem}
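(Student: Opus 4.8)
The plan is to reduce, via Hochster's criterion (Theorem~\ref{theorem--partial algebra modification}), to showing that no \emph{bad} double sequence of partial algebra modifications of $R$ over $R\to R/Q$ exists, and then to rule out any such sequence by transporting it into the almost Cohen--Macaulay perfectoid algebras produced by Lemma~\ref{lemma--commutative diagram at the almost level}. First I would reduce to the case that $R$ is normal: since $R$ is a complete local domain it is analytically irreducible, so its normalization $\overline R$ is a module-finite complete normal local domain, still with residue field $k$ (as $k$ is algebraically closed); choosing a height one prime $Q'$ of $\overline R$ over $Q$ we have $p\notin Q'$, so $\overline R/Q'$ is again a mixed characteristic complete local domain and $R/Q\hookrightarrow \overline R/Q'$. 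A balanced big Cohen--Macaulay algebra for $\overline R$ (resp.\ $\overline R/Q'$) is one for $R$ (resp.\ $R/Q$) because systems of parameters of $R$ (resp.\ $R/Q$) remain systems of parameters upstairs, and the commuting square descends; so we may assume $R$ is a complete normal local domain. Now apply Lemmas~\ref{lemma--choosing coordinates} and~\ref{lemma--commutative diagram at the almost level} to obtain $A\cong W(k)[[x_1,\dots,x_{d-1}]]$ with $A\to R$ module-finite and $Q\cap A=(x_1)$, an element $g\in A$, nonzero in $A/x_1A$, with $A_g\to R_g$ and $(A/x_1A)_g\to (R/Q)_g$ finite \'etale, and for each $n>0$ a commuting square linking $R\to R/Q$ to $R_{\infty,n}\to (R/Q)_{\infty,n}$, where $R_{\infty,n}$ (resp.\ $(R/Q)_{\infty,n}$) is integral perfectoid and almost finite \'etale over $A_\infty\langle\frac{p^n}{g}\rangle$ (resp.\ $(A/x_1A)_\infty\langle\frac{p^n}{g}\rangle$). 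Fix the system of parameters $p,x_1,\dots,x_{d-1}$ of $R$ (images of a regular system of parameters of $A$) and $p,x_2,\dots,x_{d-1}$ of $R/Q$ (its image, since $x_1\in Q$). By Theorem~\ref{theorem--partial algebra modification} it then suffices to show there is no bad double sequence of partial algebra modifications of $R$ over $R\to R/Q$ with respect to these two fixed systems of parameters; it is precisely because the systems of parameters are fixed (the improvement over Hochster's original statement) that we can match them against the single perfectoid pair above.

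Suppose, for contradiction, such a bad double sequence exists:
\[
R=M_0\to M_1\to\cdots\to M_r \ \text{ over } R, \qquad N_0=(R/Q)\otimes_R M_r\to N_1\to\cdots\to N_s \ \text{ over } R/Q,
\]
built from finitely many relations of the prescribed shape, with all degree bounds at most some $N$, and with the image of $1\in R$ in $N_s$ lying in $\mathfrak n N_s$. Fix $n\gg 0$, to be chosen at the end. Each partial modification is a degree-bounded polynomial quotient defined by an explicit linear relation, hence survives base change; so the whole diagram base changes along the square of Lemma~\ref{lemma--commutative diagram at the almost level} to give partial algebra modifications $M_i\otimes_R R_{\infty,n}$ of $R_{\infty,n}$ with respect to $p,x_1,\dots,x_{d-1}$, followed by modifications $N_j\otimes_{R/Q}(R/Q)_{\infty,n}$ of $(R/Q)_{\infty,n}$ with respect to $p,x_2,\dots,x_{d-1}$ (using $N_0\otimes_{R/Q}(R/Q)_{\infty,n}=M_r\otimes_R(R/Q)_{\infty,n}$), with the image of $1\in R_{\infty,n}$ in the last module lying in $\mathfrak n$ times that module.

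The crux is that $A_\infty\langle\frac{p^n}{g}\rangle$ and $(A/x_1A)_\infty\langle\frac{p^n}{g}\rangle$ are almost Cohen--Macaulay with respect to $p,x_1,\dots,x_{d-1}$ and $p,x_2,\dots,x_{d-1}$ respectively, with quantitative control on the defect: for each partial system of parameters, multiplying the relevant colon ideal $\big((x_1,\dots,x_j):x_{j+1}\big)$ by an arbitrarily small power of $p$ lands inside $(x_1,\dots,x_j)$, the error being governed by $\tfrac{p^n}{g}$. This is the colon-ideal analysis of $A_\infty\langle\frac{p^n}{g}\rangle$ referred to in the introduction, and, $R_{\infty,n}$ and $(R/Q)_{\infty,n}$ being almost finite \'etale (hence almost faithfully flat) over the respective bases, they inherit it. Consequently each modification step, read in $R_{\infty,n}$, is \emph{already} trivial after scaling by a power $p^{\epsilon}$ of $p$ with $\epsilon$ of the form $N'/p^k$ ($N'$ fixed, $k$ arbitrary): a compatible map from one stage to $R_{\infty,n}$ extends to the next stage once multiplied by $p^{\epsilon}$ (solve for the indeterminates after scaling). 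Propagating through the $r+s$ steps and across $R_{\infty,n}\to (R/Q)_{\infty,n}$, the image of $1$ is carried into $\mathfrak n(R/Q)_{\infty,n}$ after multiplication by $p^{N'/p^k}$; raising to a power gives $p^{N'/p^k}\in(p,x_2,\dots,x_{d-1})(R/Q)_{\infty,n}$ for every $k$. Since $(p,x_2,\dots,x_{d-1})(R/Q)_{\infty,n}$ is closed in the $p$-adic topology (its quotient is $p$-torsion, hence separated) and $p^{N'/p^k}\to 1$ $p$-adically, we conclude $1\in(p,x_2,\dots,x_{d-1})(R/Q)_{\infty,n}$, i.e.\ $(R/Q)_{\infty,n}/(p,x_2,\dots,x_{d-1})=0$. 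This contradicts the fact that this quotient is nonzero (being almost faithfully flat over $(A/x_1A)_\infty\langle\frac{p^n}{g}\rangle/(p,x_2,\dots,x_{d-1})$, which one checks is not almost zero). Hence there is no bad double sequence, and Theorem~\ref{theorem--partial algebra modification} produces the desired commutative diagram of balanced big Cohen--Macaulay algebras, proving Theorem~\ref{theorem--big CM}.

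The main obstacle is the previous paragraph: establishing the quantitative colon-ideal (almost Cohen--Macaulay) estimate for $A_\infty\langle\frac{p^n}{g}\rangle$ — using $g\mid p^n$ there together with the structure of Andr\'e's construction — transporting it across the almost finite \'etale maps to $R_{\infty,n}$ and $(R/Q)_{\infty,n}$, and then bookkeeping carefully how the $p^{\epsilon}$-errors accumulate through the double sequence so that they remain clean (no residual $(p^{1/p^\infty})$-error, which would survive the $p$-adic limit) and are wiped out as $n\to\infty$. A secondary technical point is checking that the partial modifications base change cleanly along the (non-flat, non-Noetherian) maps $R\to R_{\infty,n}$ and $R/Q\to (R/Q)_{\infty,n}$, that the two sides stay linked by $R_{\infty,n}\to (R/Q)_{\infty,n}$ throughout, and that the mod-$(p,x_2,\dots,x_{d-1})$ reductions of the perfectoid algebras are genuinely (not almost) nonzero.
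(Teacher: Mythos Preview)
Your overall architecture matches the paper: reduce to $R$ normal, invoke Lemma~\ref{lemma--commutative diagram at the almost level}, fix the systems of parameters $p,x_1,\dots,x_{d-1}$ and $p,x_2,\dots,x_{d-1}$, appeal to Theorem~\ref{theorem--partial algebra modification}, and kill a hypothetical bad double sequence by mapping it into the perfectoid algebras and invoking the colon-ideal estimate (Lemma~\ref{lemma-annihilator of almost projective module}). The paper also uses Hochster's extension lemma (Lemma~\ref{lemma-Hochster}) rather than ``base change'' to propagate the modifications into $R_{\infty,n}[1/c]$, but your description is close enough in spirit. The genuine problems are in the endgame.

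First, the colon-ideal estimate is not a pure power of $p$: Lemma~\ref{lemma-annihilator of almost projective module} gives that $(p^{1/p^\infty})(pg)^{1/p^a}$ kills the relevant quotient, so after propagating through the double sequence (via Lemma~\ref{lemma-Hochster}) the paper obtains $(pg)^{2NN'/p^a}\in(p,x_2,\dots,x_{d-1})(R/Q)_{\infty,n}$, not $p^{N'/p^k}$. The factor $g$ must be carried along; you cannot drop it.

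Second, and fatally, your closing topological step is wrong: $p^{N'/p^k}$ does \emph{not} converge to $1$ $p$-adically. In $K^\circ$ the element $1-p^{N'/p^k}$ reduces to $1$ in the residue field, hence is a unit of valuation $0$ for every $k$; the sequence stays at distance $1$ from $1$. So even granting your containment, closedness of $(p,x_2,\dots,x_{d-1})(R/Q)_{\infty,n}$ gives nothing. The paper's route to a contradiction is quite different and uses an extra lemma you are missing: after passing to $(A/x_1A)_\infty\langle\frac{p^n}{g}\rangle$ via almost finite \'etaleness, one applies Lemma~\ref{lemma-forcing elements in Ainfinity} to pull the relation
\[
(pg)^{(2NN'+1)/p^a}\in(p,x_2,\dots,x_{d-1})(A/x_1A)_\infty\langle\tfrac{p^n}{g}\rangle
\]
back to $(pg)^{(2NN'+2)/p^a}\in(p,x_2,\dots,x_{d-1})(A/x_1A)_\infty$. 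Since $N,N'$ are independent of $a$ (and of $n$, provided $n>p^a+m$), letting $a\to\infty$ forces $pg\in\bigcap_m(p,x_2,\dots,x_{d-1})^m(A/x_1A)_\infty$, and by almost faithful flatness over the \emph{Noetherian} ring $A/x_1A$ this yields $p^2g\in\bigcap_m(p,x_2,\dots,x_{d-1})^m(A/x_1A)=0$, a contradiction. The descent to a Noetherian ring, where Krull's intersection theorem is available, is exactly what replaces your failed limit argument.
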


To prove this we need several lemmas.

\begin{lemma}
\label{lemma-forcing elements in Ainfinity}
Let $A\cong W(k)[[x_1,\dots,x_{d-1}]]$ be a complete and unramified regular local ring with $k$ perfect, and let $I=(p, y_1, \dots,y_s)$ be an ideal of $A$ that contains $p$. Fix a nonzero element $g=p^mg_0\in A$ where $p\nmid g_0$, and consider the extension $A\to A_\infty\to A_\infty\langle \frac{p^n}{g}\rangle$. Suppose $z\in IA_\infty\langle \frac{p^n}{g}\rangle\cap A_\infty$ for some $n>p^a+m$ (one should think that $n\gg p^a\gg0$ here), then we have $(pg)^{1/p^a}z\in IA_\infty$.
\end{lemma}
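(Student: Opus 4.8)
The plan is to exploit the almost-explicit description of $A_\infty\langle\frac{p^n}{g}\rangle$ as the $p$-adic completion of $A_\infty[(\frac{p^n}{g})^{1/p^\infty}]$, so that a general element of $IA_\infty\langle\frac{p^n}{g}\rangle$ can be written, up to $p$-adic errors, as a finite sum $\sum_\alpha c_\alpha (\frac{p^n}{g})^{\alpha}$ with $c_\alpha\in IA_\infty$ and $\alpha$ ranging over nonnegative $p$-adic rationals. The key idea is to clear denominators by multiplying by a suitable fractional power of $g$: multiplying such a sum by $g^{1/p^a}$ turns each term $c_\alpha(\frac{p^n}{g})^\alpha$ into something of the form $c_\alpha p^{n\alpha} g^{1/p^a - \alpha}$, and one wants to argue that after also multiplying by $p^{1/p^a}$ the negative powers of $g$ that remain are compensated by the (large) positive powers of $p$ coming from $p^{n\alpha}$, using $n > p^a + m$ and $g = p^m g_0$. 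The inequality $n>p^a+m$ is exactly what is needed so that each $p^{n\alpha}$ carries enough powers of $p$ to absorb the $p^{-m\alpha}$ coming from writing $g^{-\alpha} = p^{-m\alpha}g_0^{-\alpha}$ and to leave a genuine element of $A_\infty$ (not just of $A_\infty[1/p]$), while $p^{1/p^a}$ handles the finitely many "boundary" terms where $\alpha$ is small.

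The steps I would carry out, in order, are: (1) Reduce to the case where $z$ is actually a finite sum $\sum_{\alpha} c_\alpha (p^n/g)^\alpha$ with $c_\alpha\in IA_\infty$ and $\alpha\in \mathbb{Z}[1/p]_{\geq 0}$, by approximating $z$ $p$-adically modulo a high power $p^M$ — here one uses that $z\in A_\infty$ is already $p$-adically complete, and that the error term $p^M w$ (with $w\in A_\infty\langle\frac{p^n}{g}\rangle\cap A_\infty$) is harmless if $M$ is chosen $\gg n/p^a$, since then $(pg)^{1/p^a}p^M w\in IA_\infty$ trivially once $p^M$ dominates. (2) Because $z\in A_\infty$, reorganize the sum by collecting integer parts: write each $\alpha = \lfloor\alpha\rfloor + \{\alpha\}$ and note $g^{\lfloor\alpha\rfloor}$ is already in $A_\infty$, so it suffices to control the "fractional" contributions $\sum_\beta d_\beta (p^n/g)^\beta$ with $0\leq \beta <1$ in $\mathbb{Z}[1/p]$ and $d_\beta\in IA_\infty$. (3) For such a term, compute $(pg)^{1/p^a}\cdot d_\beta (p^n/g)^\beta = d_\beta\, p^{1/p^a} p^{n\beta} p^{-m\beta} g_0^{1/p^a}g_0^{-\beta}$, and since $g_0^{1/p^a-\beta}$ — when $\beta \leq 1/p^a$ — lies in $A_\infty$ (positive fractional power of an honest element, available in $A_\infty$ by construction), or when $\beta > 1/p^a$ one instead writes $g_0^{1/p^a-\beta} = g_0^{-(\beta - 1/p^a)}$ and absorbs it using that $p^{(n-m)\beta - 1/p^a}\cdot g_0^{-(\beta-1/p^a)} \in A_\infty$ because $g_0$ divides a power of $p$ after inverting... — actually the cleanest bookkeeping is: in $A_\infty\langle\frac{p^n}{g}\rangle$ one has $g \mid p^n$, hence $g^{1/p^a}\mid p^{n/p^a}$, so $g^{1/p^a}(p^n/g)^\beta$ for $\beta\leq 1/p^a$ is manifestly $p^{n\beta}(p^n/g)^{1/p^a-\beta}\cdot$(unit-ish) times the fact that $(p^n/g)^{1/p^a-\beta}\in A_\infty\langle\frac{p^n}{g}\rangle$, and then one checks the result already lies in $A_\infty$. (4) Assemble: the finitely many $\beta\in[0,1)\cap\mathbb{Z}[1/p]$ relevant here (those with bounded $p$-adic denominator, the rest being swept into the $p^M$-error of step 1) each produce, after multiplication by $(pg)^{1/p^a}$, an element of $IA_\infty$; summing, $(pg)^{1/p^a}z\in IA_\infty$.

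The main obstacle I expect is step (3): making precise the claim that multiplying a term $d_\beta(p^n/g)^\beta$ by the single fractional power $(pg)^{1/p^a}$ lands back in $A_\infty$ rather than merely in $A_\infty[1/p]$ or in $A_\infty\langle\frac{p^n}{g}\rangle$. The subtlety is that $(p^n/g)^\beta$ is only an element of the larger ring $A_\infty\langle\frac{p^n}{g}\rangle$, and one must use the divisibility $g\mid p^n$ there together with the precise numerical inequality $n>p^a+m$ to guarantee that after clearing the denominator $g^\beta$ one is left with a nonnegative power of $p$ times an element of $A_\infty$; the exponent $a$ must be chosen so that $1/p^a$ is large enough relative to the $p$-adic denominators of the finitely many surviving $\beta$, which is why the statement is phrased with "$n\gg p^a\gg 0$". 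Handling the tail of infinitely many $\beta$ with ever-larger denominators, and showing it contributes only a $p$-adically small error, is the other delicate point, but it follows from $p$-adic completeness of $A_\infty$ once the finite truncation is under control.
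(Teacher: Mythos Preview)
There is a genuine gap in step (3). Consider an exponent $\beta$ with $1/p^a < \beta < 1$, for instance $\beta=1/p$ when $a\geq 2$. Then
\[
(pg)^{1/p^a}\, d_\beta \Bigl(\tfrac{p^n}{g}\Bigr)^{\!\beta}
= p^{(1+n)/p^a}\, d_\beta\,\Bigl(\tfrac{p^n}{g}\Bigr)^{\!\beta - 1/p^a}
\]
still carries a strictly positive power of $p^n/g$ and hence lies only in $A_\infty\langle\frac{p^n}{g}\rangle$, not in $A_\infty$. Your attempted fix, that $p^{(n-m)\beta - 1/p^a}\cdot g_0^{-(\beta-1/p^a)} \in A_\infty$, is false: since $p\nmid g_0$, the element $g_0$ is (almost) a nonzerodivisor on $A_\infty$ modulo every power of $p$, so no positive fractional power of $g_0$ divides any positive power of $p$. (There is also the secondary issue that Andr\'e's construction adjoins compatible $p$-power roots of $g$, not of $g_0$, so expressions like $g_0^{1/p^a-\beta}$ are not a priori available in $A_\infty$.) The same defect infects step~(1): writing $z=(\text{finite sum})+p^M w$ inside $A_\infty\langle\frac{p^n}{g}\rangle$, the finite sum need not lie in $A_\infty$, so neither does $w$, and your error estimate on $p^M w$ does not follow.

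The structural problem is that your argument never uses the hypothesis $z\in A_\infty$ on the main terms; without it, the best a term-by-term analysis can yield is $(pg)^{1/p^a}z\in IA_\infty\langle\frac{p^n}{g}\rangle$, which is where you started. The paper's proof supplies exactly the missing idea: instead of multiplying by the small power $g^{1/p^a}$, one \emph{overshoots}, multiplying the equation $p^{1/p^t}z=pf_0+\sum y_jf_j$ by a large integral power $g_0^h$ (with $h$ at least the maximal exponent of $p^n/g$ appearing in the $f_j$) so that every term lands in $A_\infty$; this gives $p^{1/p^t}g_0^h z\in (g_0^{h-1/p^a},\,p^{(n-m)/p^a})\cdot IA_\infty$. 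One then works in $A_\infty/p^{(n-m)/p^a}$ and cancels the excess factor $g_0^{h-1/p^a}$ using that $g_0$ is an almost nonzerodivisor there --- this is where $p\nmid g_0$ and the almost-flatness of $A_{\infty,0}\to A_\infty$ modulo $p$ are used, and it is precisely the step where the fact that $z$ (hence $g_0^h z$) already lies in $A_\infty$ is essential. Since $n>p^a+m$ forces $(n-m)/p^a>1$, the resulting congruence already implies $(pg)^{1/p^a}z\in IA_\infty$. This overshoot-then-cancel-by-almost-regularity manoeuvre is the heart of the argument and is absent from your plan.
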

\begin{proof}
Using the almost explicit description of $A_\infty\langle \frac{p^n}{g}\rangle$ \cite[Lemma 6.4]{ScholzePerfectoidspaces}, we have $$p^{1/p^t}z\in I\widehat{A_\infty[(\frac{p^n}{g})^{1/p^{\infty}}]}$$ for some $t>a$. This implies that the image of $p^{1/p^t}z$ in $A_\infty[(\frac{p^n}{g})^{1/p^{\infty}}]/p=\widehat{A_\infty[(\frac{p^n}{g})^{1/p^{\infty}}]}/p$ is contained in the ideal $(y_1,\dots, y_s)$. Therefore we can write
$$p^{1/p^t}z=pf_0+y_1f_1+\cdots+y_sf_s$$
where $f_0, f_1,\dots,f_s\in A_\infty[(\frac{p^n}{g})^{1/p^{\infty}}]$. Then there exists integers $k$ and $h$ such that $f_0, f_1,\dots,f_s$ are elements in $A_\infty[(\frac{p^n}{g})^{1/p^k}]$ of degree bounded by $p^kh$. Multiplying by $g_0^h$ to clear all the denominators in $f_i$, one gets:
$$p^{1/p^t}g_0^hz\in(g_0^{h-(1/p^{a})},p^{(n-m)/p^a})\cdot (p, y_1,\dots,y_s)A_\infty.$$
From this we know:
$$p^{1/p^t}g_0^hz=g_0^{h-(1/p^a)}(ph_0+y_1h_1+\cdots+y_sh_s) \text{ in } A_\infty/p^{(n-m)/p^{a}},$$
where $h_0,h_1,\dots,h_s\in A_\infty$. Rewriting this we have
$$g_0^{h-(1/p^a)}(p^{1/p^t}g_0^{1/p^a}z-ph_0-y_1h_1-\cdots-y_sh_s)=0 \text{ in } A_\infty/p^{(n-m)/p^{a}}.$$
Since $p\nmid g_0$, $g_0$ is a nonzerodivisor on $A/p$. This implies $g_0^{h-(1/p^a)}$ is an almost nonzerodivisor on $A_\infty/p^{(n-m)/p^{a}}$ since $A\to A_{\infty, 0}$ is faithfully flat and $A_{\infty, 0}\to A_\infty$ is almost faithfully flat modulo $p$. Hence $p^{1/p^t}g_0^{1/p^a}z-ph_0-y_1h_1-\cdots-y_sh_s$ is killed by $(p^{1/p^{\infty}})$. In particular, since $t>a$, we know $$(pg_0)^{1/p^{a}}z\in (p, y_1,\dots, y_s) \text{ in } A_\infty/p^{(n-m)/p^{a}}.$$ Finally, since $n>p^a+m$ and $g$ is a multiple of $g_0$, we have $$(pg)^{1/p^a}z\in(p,y_1,\dots,y_s)A_\infty.$$
This finishes the proof.
\end{proof}

\begin{lemma}
\label{lemma-forcing elements in pA_infty}
Let $A\cong W(k)[[x_1,\dots,x_{d-1}]]$ be a complete and unramified regular local ring with $k$ perfect. Fix a nonzero element $g=p^mg_0\in A$ where $p\nmid g_0$ and $n>m$. Suppose $z\in A_\infty[(\frac{p^n}{g})^{1/p^\infty}]$ and $p^Dz\in A_\infty$ for some $D>0$. Then $p^Dz\in p^{D-(1/p^t)}A_\infty$  for all $t$.
\end{lemma}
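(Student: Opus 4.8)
The plan is to follow the method of Lemma~\ref{lemma-forcing elements in Ainfinity}: descend to a finite level, clear denominators, and cancel the prime-to-$p$ part of $g$ by an almost-nonzerodivisor argument. Concretely, I would write $z=\sum_{i=0}^{N}a_i(\frac{p^n}{g})^{i/p^k}$ with $a_i\in A_\infty$ and $k$ as large as convenient. The two facts to exploit are that $g^{1/p^k}\in A_\infty$ with $g^{1/p^k}\cdot(\frac{p^n}{g})^{1/p^k}=p^{n/p^k}$, and that $g_0\cdot\frac{p^n}{g}=p^{n-m}$ with $n-m\ge 1$ (here $n>m$ is used). Multiplying $z$ by $g_0^{\lfloor N/p^k\rfloor}$ absorbs every term $(\frac{p^n}{g})^{i/p^k}$ with $i\ge p^k$ at the cost of factors $g_0$ and $p^{n-m}$ only---no new powers of $p$---and one further multiplication by $g$ clears the remaining terms of degree $<p^k$; thus $v:=g\,g_0^{\lfloor N/p^k\rfloor}z=p^{m}g_0^{\lfloor N/p^k\rfloor+1}z\in A_\infty$.

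Setting $u:=p^{D}z\in A_\infty$ and comparing $u$ and $v$ inside $A_\infty\langle\frac{p^n}{g}\rangle$---noting that the kernel of $A_\infty\to A_\infty\langle\frac{p^n}{g}\rangle$ is annihilated by $(p^{1/p^\infty})$, since $g$ is an almost nonzerodivisor on $A_\infty$, so that the resulting identity holds in $A_\infty$ up to almost-zero elements---one obtains $p^{m}g_0^{\lfloor N/p^k\rfloor+1}u=p^{D}v$. Enlarging $D$ we may assume $D\ge m$, and since $A_\infty$ is $p$-torsion free we may cancel $p^{m}$ to get $g_0^{\lfloor N/p^k\rfloor+1}u\equiv 0$ in $A_\infty/p^{D-m}$. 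Because $g_0$ is a nonzerodivisor on $A/p$ (as $A$ is a domain and $p\nmid g_0$), hence on $A/p^{D-m}$ by d\'{e}vissage, hence on $A_{\infty,0}/p^{D-m}$ by faithful flatness of $A\to A_{\infty,0}$, hence an almost nonzerodivisor on $A_\infty/p^{D-m}$ by almost faithful flatness modulo $p$ of $A_{\infty,0}\to A_\infty$, it follows that $p^{1/p^t}u\in p^{D-m}A_\infty$ for all $t$---that is, $p^{m+1/p^t}z\in A_\infty$ for all $t$.

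It remains to remove the surplus factor $p^{m}$, i.e.\ to pass from $p^{m+1/p^t}z\in A_\infty$ to the claimed $p^{1/p^t}z\in A_\infty$ (equivalently $p^{D}z\in p^{D-1/p^t}A_\infty$). This is the main obstacle, and it is where the hypothesis $p^{D}z\in A_\infty$ must be used again rather than just once: I would either induct on $D$, using the bound already obtained to reduce to small $D$, or iterate Lemma~\ref{lemma-forcing elements in Ainfinity}, applied with $I=(p)$ to $p^{D}z\in(p)A_\infty\langle\frac{p^n}{g}\rangle\cap A_\infty$, each pass trading a power of $p$ for a higher $p^k$-th root of $g$ whose prime-to-$p$ part is again discarded; carrying out this bootstrap while keeping exact account of the powers of $p$ and of $g^{1/p^k}$ introduced at each clearing of denominators is the delicate point. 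The remaining ingredients---the almost-injectivity of $A_\infty\to A_\infty\langle\frac{p^n}{g}\rangle$ and the almost-faithful-flatness facts---are routine and need only be recorded with care.
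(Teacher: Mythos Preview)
Your proposal stalls at exactly the place you flag as ``the main obstacle,'' and neither bootstrap you suggest closes it. Running the argument again with a smaller $D'$ only reproduces $p^{m+1/p^{t}}z\in A_\infty$: the exponent never drops below $m$, so induction on $D$ is static. Likewise, invoking Lemma~\ref{lemma-forcing elements in Ainfinity} with $I=(p)$ yields $(pg)^{1/p^{a}}(p^{D}z)\in pA_\infty$, which still carries the factor $g^{1/p^{a}}=p^{m/p^{a}}g_0^{1/p^{a}}$ and gives no net gain over what you already have. So this is a genuine gap, not a routine bookkeeping step.

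The paper's proof avoids the problem entirely by never introducing the factor $g$. The observation you are missing is that an integer power of $g_0$ \emph{alone} clears the denominators: since $\frac{p^{n}}{g}=\frac{p^{\,n-m}}{g_0}$ and since $g_0$ (like $g$ and $p$) admits compatible $p$-power roots in $A_\infty$ --- a fact the paper uses freely, e.g.\ the term $g_0^{\,h-1/p^{a}}$ in the proof of Lemma~\ref{lemma-forcing elements in Ainfinity} --- one has
\[
g_0^{h}\Bigl(\tfrac{p^{n}}{g}\Bigr)^{i/p^{k}}
= g_0^{\,h-i/p^{k}}\,p^{(n-m)i/p^{k}}\in A_\infty
\]
for $h\ge N/p^{k}$, hence $g_0^{h}z\in A_\infty$. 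Then $g_0^{h}(p^{D}z)=p^{D}(g_0^{h}z)\in p^{D}A_\infty$, and because $g_0$ is a nonzerodivisor on $A/p^{D}$ and $A_\infty/p^{D}$ is almost faithfully flat over $A/p^{D}$, one gets $p^{1/p^{t}}(p^{D}z)\in p^{D}A_\infty$ immediately. That is the whole proof: no factor of $p^{m}$ ever appears, so there is nothing to remove and no bootstrap is needed.
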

\begin{proof}
There exist $k\gg0$ such that $z\in A_\infty[(\frac{p^n}{g})^{1/p^k}]$. Choosing a high enough power of $g_0$ to clear denominators, we get $g_0^hz\in A_\infty$. So $g_0^h(p^Dz)\in p^DA_\infty$. Since $g_0$ is a nonzerodivisor on $A/p^D$ and $A_\infty/p^D$ is almost faithfully flat over $A/p^D$, $p^{1/p^t}p^Dz\in p^DA_\infty$ for all $t$. Since $A_\infty$ is $p$-torsion free, $p^Dz\in p^{D-(1/p^t)}A_\infty$  for all $t$.
\end{proof}

\begin{lemma}
\label{lemma-annihilator of almost projective module}
Let $A\cong W(k)[[x_1,\dots,x_{d-1}]]$ be a complete and unramified regular local ring with $k$ perfect. Fix a nonzero element $g=p^mg_0\in A$ where $p\nmid g_0$, and consider the extension $A\to A_\infty\to A_\infty\langle \frac{p^n}{g}\rangle$ for every $n$. Suppose $S$ is an almost finite projective $A_\infty\langle \frac{p^n}{g}\rangle$-algebra. If $p^a+m<n$, then we have $(p^{1/p^{\infty}})(pg)^{1/p^a}$ annihilates $\frac{(p,x_1,\dots,x_s)S : x_{s+1}}{(p,x_1,\dots,x_s)S}$ for all $s< d-1$.
\end{lemma}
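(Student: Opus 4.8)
The plan is to reduce the statement about the colon ideal for a regular sequence on $S$ to a statement about the corresponding colon ideal on $A_\infty\langle\frac{p^n}{g}\rangle$, and then down to $A_\infty$, where Lemma \ref{lemma-forcing elements in Ainfinity} and Lemma \ref{lemma-forcing elements in pA_infty} do the work. First I would recall that $p,x_1,\dots,x_{d-1}$ is a regular sequence in $A$, hence (modulo $p$) an almost regular sequence in $A_{\infty,0}$ and in $A_\infty$ by almost faithful flatness; the issue is to control what happens after the rational localization to $A_\infty\langle\frac{p^n}{g}\rangle$ and after the almost finite projective algebra extension $S$. So suppose $u\in S$ with $x_{s+1}u\in (p,x_1,\dots,x_s)S$. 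The goal is to show $(p^{1/p^\infty})(pg)^{1/p^a}u\subseteq (p,x_1,\dots,x_s)S$.

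The key step is to use that $S$ is almost finite projective over $T:=A_\infty\langle\frac{p^n}{g}\rangle$: up to $(p^{1/p^\infty})$, $S$ is a direct summand of a finite free $T$-module, so the trace map (or a splitting up to $p^\epsilon$) lets me transfer the colon relation on $S$ down to a colon relation on $T$ itself. Concretely, for each $\epsilon=1/p^e$ there is a $T$-linear map $\phi\colon S\to T$ and an element $c$ with $p^\epsilon = c\cdot(\text{something})$, or more cleanly: almost projectivity gives, for every $\epsilon$, a factorization $S \xrightarrow{\alpha} T^{\oplus N}\xrightarrow{\beta} S$ with $\beta\alpha$ equal to multiplication by $p^\epsilon$. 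Applying $\alpha$ to the relation $x_{s+1}u\in(p,x_1,\dots,x_s)S$ gives, in each coordinate, elements $z\in T=A_\infty\langle\frac{p^n}{g}\rangle$ with $x_{s+1}z\in (p,x_1,\dots,x_s)T$, i.e. $z\in \big((p,x_1,\dots,x_s)T : x_{s+1}\big)$. Now I want to know this colon ideal inside $T$; but $T=A_\infty\langle\frac{p^n}{g}\rangle$ is obtained from $A_\infty$ by the almost-explicit description as the $p$-adic completion of $A_\infty[(\frac{p^n}{g})^{1/p^\infty}]$, and here is where Lemma \ref{lemma-forcing elements in Ainfinity} enters: any element of $(p,x_1,\dots,x_s)T\cap A_\infty$ becomes, after multiplying by $(pg)^{1/p^a}$, an element of $(p,x_1,\dots,x_s)A_\infty$ (taking $I=(p,x_1,\dots,x_s)$, noting $p\in I$). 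Combined with the fact that $p,x_1,\dots,x_{s+1}$ is an almost regular sequence on $A_\infty$ modulo $p$ (so the colon of $x_{s+1}$ into $(p,x_1,\dots,x_s)A_\infty$ is almost $(p,x_1,\dots,x_s)A_\infty$), this forces $z$ itself — after multiplication by $(p^{1/p^\infty})(pg)^{1/p^a}$ — back into $(p,x_1,\dots,x_s)T$, using Lemma \ref{lemma-forcing elements in pA_infty} to absorb the extra powers of $p$ that come from passing between $A_\infty[(\frac{p^n}{g})^{1/p^\infty}]$ and its completion. Finally, applying $\beta$ and using $\beta\alpha = p^\epsilon\cdot\mathrm{id}$ transfers the conclusion back to $S$: $p^\epsilon(p^{1/p^\infty})(pg)^{1/p^a}u\in(p,x_1,\dots,x_s)S$, and letting $\epsilon\to 0$ gives $(p^{1/p^\infty})(pg)^{1/p^a}u\in(p,x_1,\dots,x_s)S$ since $(p,x_1,\dots,x_s)S$ is $p$-adically closed (or one absorbs the $p^\epsilon$ into the $(p^{1/p^\infty})$ factor directly).

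I would organize the write-up as: (i) reduce via almost projectivity of $S$ to the case $S=T$; (ii) intersect the colon relation with $A_\infty$ and apply Lemma \ref{lemma-forcing elements in Ainfinity} to push into $(p,x_1,\dots,x_s)A_\infty$; (iii) use almost regularity of $p,x_1,\dots,x_{d-1}$ on $A_\infty$ (modulo $p$, which suffices here since $p$ is in our ideal) to solve the colon problem over $A_\infty$; (iv) use Lemma \ref{lemma-forcing elements in pA_infty} to return from $A_\infty$ to $A_\infty[(\frac{p^n}{g})^{1/p^\infty}]$ and hence to $T$ with only an extra $(p^{1/p^\infty})$ cost; (v) transfer back to $S$. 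The main obstacle I anticipate is step (ii)–(iv): carefully tracking which ring an element lives in as one moves between $A_\infty$, $A_\infty[(\frac{p^n}{g})^{1/p^\infty}]$, and its $p$-adic completion $T$, and ensuring the extra factors of $(pg)^{1/p^a}$ and $(p^{1/p^\infty})$ that each lemma contributes combine exactly into the claimed annihilator $(p^{1/p^\infty})(pg)^{1/p^a}$ rather than something worse; the condition $p^a+m<n$ is precisely what makes the bookkeeping in Lemma \ref{lemma-forcing elements in Ainfinity} close up, so I would be careful to invoke it at the right moment.
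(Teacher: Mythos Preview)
Your overall architecture is right and matches the paper: reduce from $S$ to $T=A_\infty\langle\frac{p^n}{g}\rangle$ using almost projectivity (the paper does this at the end via \cite[Lemma~2.4.31]{GabberRameroAlmostringtheory}, which is exactly your $\alpha,\beta$ factorization packaged as an almost isomorphism of Hom-modules), then descend the colon problem to $A_\infty$, invoke almost regularity of $p,x_1,\dots,x_{s+1}$ there, and climb back up.

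The gap is in your step~(ii). You propose to apply Lemma~\ref{lemma-forcing elements in Ainfinity} to ``intersect the colon relation with $A_\infty$'', but that lemma takes as input an element of $IA_\infty\langle\frac{p^n}{g}\rangle\cap A_\infty$, and you have no such element: your $z$ lives in $T$, not in $A_\infty$, and neither does $x_{s+1}z$. If you try to manufacture one by clearing denominators---multiplying by $g_0^h$ for $h$ large enough to push $z$ into $A_\infty$---then $h$ depends on $z$ and does not cancel at the end (you would need $g_0$ to be a nonzerodivisor on $A_\infty/(p,x_1,\dots,x_s)$, which it is not in general). In fact the paper does \emph{not} use Lemma~\ref{lemma-forcing elements in Ainfinity} here at all; that lemma is only invoked later, in the proof of Theorem~\ref{theorem--big CM}.

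What the paper does instead for the descent is a direct decomposition. After replacing $y$ by a lift $z\in A_\infty[(\frac{p^n}{g})^{1/p^\infty}]$ modulo $p$, write $z=u+(\frac{p^n}{g})^{1/p^a}u'$ with $g_0^{1/p^a}u\in A_\infty$ (so only a \emph{fixed} fractional power of $g_0$ is needed, not an unbounded $g_0^h$), and write $x_{s+1}z=v+(\frac{p^n}{g})^{1/p^a}v'$ similarly with $g_0^{1/p^a}v\in (p,x_1,\dots,x_s)A_\infty$. Comparing the two expressions for $x_{s+1}g_0^{1/p^a}z$ shows that the cross-term $p^{(n-m)/p^a}(v'-x_{s+1}u')$ lies in $A_\infty$; this is where Lemma~\ref{lemma-forcing elements in pA_infty} actually enters, forcing that cross-term into $pA_\infty$ (your step~(iv) mis-locates its role). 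One then reads off $x_{s+1}(g_0^{1/p^a}u)\in(p,x_1,\dots,x_s)A_\infty$, and only now does almost regularity on $A_\infty$ apply. Reassembling gives $(pg_0)^{1/p^a}z\in(p,x_1,\dots,x_s)A_\infty[(\frac{p^n}{g})^{1/p^\infty}]$, hence the claimed annihilation in $T$. So the missing idea is this low-degree/high-degree splitting at level $1/p^a$; once you have it, your steps (iii) and (v) go through as you say.
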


\begin{proof}
Suppose $y\in(p,x_1,\dots,x_s){A_\infty\langle \frac{p^n}{g}\rangle}:x_{s+1}$. Since $y$ is an element of $A_\infty\langle \frac{p^n}{g}\rangle$, for every $t>0$, $p^{1/p^t}y\in \widehat{A_\infty[(\frac{p^n}{g})^{1/p^\infty}]}$ and $x_{s+1}p^{1/p^t}y\in(p,x_1,\dots,x_s)\widehat{A_\infty[(\frac{p^n}{g})^{1/p^\infty}]}$ by \cite[Lemma 6.4]{ScholzePerfectoidspaces}. Thus modulo $p$, $p^{1/p^t}y$ gives an element in $\widehat{A_\infty[(\frac{p^n}{g})^{1/p^\infty}]}/p=A_\infty[(\frac{p^n}{g})^{1/p^\infty}]/p$. We pick $z\in A_\infty[(\frac{p^n}{g})^{1/p^\infty}]$ such that $z\equiv p^{1/p^t}y$ modulo $p\widehat{A_\infty[(\frac{p^n}{g})^{1/p^\infty}]}$.

Now $x_{s+1}z$ is an element of $A_\infty[(\frac{p^n}{g})^{1/p^\infty}]$ whose image in $A_\infty[(\frac{p^n}{g})^{1/p^\infty}]/p=\widehat{A_\infty[(\frac{p^n}{g})^{1/p^\infty}]}/p$ is contained in the ideal $(x_1,\dots,x_s)(\widehat{A_\infty[(\frac{p^n}{g})^{1/p^\infty}]}/p)$. Therefore we know $$x_{s+1}z\in (p,x_1,\dots,x_s){A_\infty[(\frac{p^n}{g})^{1/p^\infty}]}$$ %$$x_{s+1}z\in (p,x_1,\dots,x_s)\widehat{A_\infty[(\frac{p^n}{g})^{1/p^\infty}]}\cap {A_\infty[(\frac{p^n}{g})^{1/p^\infty}]}=(p,x_1,\dots,x_s){A_\infty[(\frac{p^n}{g})^{1/p^\infty}]}$$
and thus $$z\in (p,x_1,\dots,x_s)A_\infty[(\frac{p^n}{g})^{1/p^\infty}] :x_{s+1}.$$

Next we write $z=u+(\frac{p^n}{g})^{1/p^a}u'$ such that $g_0^{1/p^a}u\in A_\infty$, $u'\in A_\infty[(\frac{p^n}{g})^{1/p^\infty}]$, and we also write $x_{s+1}z=v+(\frac{p^n}{g})^{1/p^a}v'$ such that $g_0^{1/p^a}v\in (p,x_1,\dots,x_s)A_\infty$, $v'\in (p,x_1,\dots,x_s)A_\infty[(\frac{p^n}{g})^{1/p^\infty}]$. We consider two expressions of $x_{s+1}g_0^{1/p^a}z$: $$x_{s+1}g_0^{1/p^a}u+p^{(n-m)/p^a}x_{s+1}u'=x_{s+1}g_0^{1/p^a}z=g_0^{1/p^a}v+p^{(n-m)/p^a}v'.$$ From this we know that
\begin{equation}
\label{equation--expression involving uvu'v'}
x_{s+1}(g_0^{1/p^a}u)=g_0^{1/p^a}v+p^{(n-m)/p^a}(v'-x_{s+1}u')
\end{equation}
It follows from (\ref{equation--expression involving uvu'v'}) that $p^{(n-m)/p^a}(v'-x_{s+1}u')\in A_\infty$ (since the other two terms are in $A_\infty$). Thus by Lemma \ref{lemma-forcing elements in pA_infty}, $p^{(n-m)/p^a}(v'-x_{s+1}u')\in pA_\infty$ since $n>p^a+m$. But now (\ref{equation--expression involving uvu'v'}) tells us that
$$x_{s+1}(g_0^{1/p^a}u) \in (p,x_1,\dots,x_s)A_\infty+pA_\infty = (p,x_1,\dots,x_s)A_\infty.$$
Since $g_0^{1/p^a}u\in A_\infty$ and $p, x_1,\dots,x_{s+1}$ is an almost regular sequence on $A_\infty$, $$(pg_0)^{1/p^a}u\in (p,x_1,\dots,x_s)A_\infty.$$ But now $$(pg_0)^{1/p^a}z=(pg_0)^{1/p^a}u+p^{1/p^a}p^{(n-m)/p^a}u'.$$
Therefore
$$(pg_0)^{1/p^a}z\in (p,x_1,\dots,x_s)A_\infty+pA_\infty[(\frac{p^n}{g})^{1/p^\infty}] \subseteq (p,x_1,\dots,x_s)A_\infty[(\frac{p^n}{g})^{1/p^\infty}].$$
Because $z\equiv p^{1/p^t}y$ modulo $p\widehat{A_\infty[(\frac{p^n}{g})^{1/p^\infty}]}$ and $g$ is a multiple of $g_0$, we have
$$p^{1/p^t}(pg)^{1/p^a}y\in (p,x_1,\dots,x_s)\widehat{A_\infty[(\frac{p^n}{g})^{1/p^\infty}]}\subseteq (p,x_1,\dots,x_s)A_\infty\langle \frac{p^n}{g}\rangle.$$
Since this is true for all $t>0$, we have $(p^{1/p^{\infty}})(pg)^{1/p^a}$ annihilates $\frac{(p,x_1,\dots,x_s){A_\infty\langle \frac{p^n}{g}\rangle}:x_{s+1}}{(p,x_1,\dots,x_s){A_\infty\langle \frac{p^n}{g}\rangle}}$.
 Finally, since $S$ is an almost finite projective $A_\infty\langle \frac{p^n}{g}\rangle$-algebra, by \cite[Lemma 2.4.31]{GabberRameroAlmostringtheory}, $$\frac{(p,x_1,\dots,x_s)S : x_{s+1}}{(p,x_1,\dots,x_s)S}=\Hom_S(S/x_{s+1}, S/(p,x_1,\dots,x_s))$$ is almost isomorphic to $$S\otimes\Hom_{A_\infty\langle \frac{p^n}{g}\rangle}\left(A_\infty\langle \frac{p^n}{g}\rangle/x_{s+1}, A_\infty\langle \frac{p^n}{g}\rangle/(p,x_1,\dots,x_s)\right)=S\otimes \frac{(p,x_1,\dots,x_s){A_\infty\langle \frac{p^n}{g}\rangle}:x_{s+1}}{(p,x_1,\dots,x_s){A_\infty\langle \frac{p^n}{g}\rangle}}.$$
Therefore $(p^{1/p^{\infty}})(pg)^{1/p^a}$ annihilates $\frac{(p,x_1,\dots,x_s)S : x_{s+1}}{(p,x_1,\dots,x_s)S}$ as well.
\end{proof}

%\textcolor{red}{The above proof looks fine to me.  I have one reservation, which is really just ignorance on my part.  I am not yet convinced by the last sentence simply because I don't remember what almost finite projective means.  I recall that it is a lot different than projective in usual (non-almost) mathematics.  I'll look it up.  I am actually more concerned about the use of Lemma 3.4 in the seventh line of the proof of Theorem 3.1 since I don't know why those things are almost finite projective.  Also, as the definition is unusual, it may be best to recall it in our paper.}

We need the following lemma from \cite{HochsterBigCohen-Macaulayalgebrasindimensionthree}:

\begin{lemma}[Lemma 5.1 of \cite{HochsterBigCohen-Macaulayalgebrasindimensionthree}]
\label{lemma-Hochster}
Let $M$ be an $R$-module and let $T$ be an $R$-algebra with a map $\alpha$: $M\to T[1/c]$. Let $M\to M'$ be a partial algebra modification of $M$ with respect to part of a system of parameters $p, x_1,\dots, x_s, x_{s+1}$ with degree bound $D$. Suppose $x_{s+1}t_{s+1}=pt_0+x_1t_1+\cdots+x_st_s$ with $t_j\in T$ implies $ct_{s+1}\in (p, x_1,\dots,x_s)T$ and $\alpha(M)\subseteq c^{-N}T$. Then there is an $R$-linear map $\beta$: $M'\to T[1/c]$ extending $\alpha$ with image contained in $c^{-N'}T$ where $N'=ND+N+D$ depends only on $N$ and $D$.
\end{lemma}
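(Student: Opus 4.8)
The plan is to lift $\alpha$ to an $R$-linear map on the ``numerator'' of $M'$ and then check that it descends. Write the relation being trivialized as $x_{s+1}u_{s+1}=pu_0+x_1u_1+\cdots+x_su_s$ with $u_j\in M$, so that $F=u_{s+1}-pX_0-x_1X_1-\cdots-x_sX_s$ (here $X_0$ is the indeterminate attached to $p$) and $M'=M[X_0,\dots,X_s]_{\le D}/F\cdot R[X_0,\dots,X_s]_{\le D-1}$. It therefore suffices to build an $R$-linear map $\widetilde\beta\colon M[X_0,\dots,X_s]_{\le D}\to T[1/c]$ that agrees with $\alpha$ on the degree-zero part $M=M\cdot X^0$, annihilates $F\cdot X^b$ for every monomial $X^b$ of degree $\le D-1$ (so that it kills $F\cdot R[X_0,\dots,X_s]_{\le D-1}$), and has image inside $c^{-N'}T$; such a $\widetilde\beta$ then factors through $M'$ and produces the desired $\beta$.

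To pin down the right target values for the new variables I would run the relation through $\alpha$: $R$-linearity gives $x_{s+1}\alpha(u_{s+1})=p\,\alpha(u_0)+x_1\alpha(u_1)+\cdots+x_s\alpha(u_s)$ in $T[1/c]$, and since $\alpha(M)\subseteq c^{-N}T$ the elements $t_j:=c^{N}\alpha(u_j)$ lie in $T$ and satisfy $x_{s+1}t_{s+1}=pt_0+x_1t_1+\cdots+x_st_s$. The hypothesis on $T$ then yields $c\,t_{s+1}\in(p,x_1,\dots,x_s)T$, say $c^{N+1}\alpha(u_{s+1})=pv_0+x_1v_1+\cdots+x_sv_s$ with $v_0,\dots,v_s\in T$. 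I would then define $\widetilde\beta$ by extending $\alpha$ \emph{multiplicatively in the new variables}, declaring the image of each $X_j$ to be $c^{-(N+1)}v_j$: on the summand $M\cdot X^a$ of $M[X_0,\dots,X_s]_{\le D}=\bigoplus_{|a|\le D}M\cdot X^a$ (with $|a|=a_0+\cdots+a_s$ and $v^a:=v_0^{a_0}\cdots v_s^{a_s}$), set
\[
\widetilde\beta\big(m\,X_0^{a_0}\cdots X_s^{a_s}\big)=\alpha(m)\cdot c^{-(N+1)|a|}\,v^a .
\]
This is $R$-linear by construction and restricts to $\alpha$ at $a=0$.

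The remaining verifications are then formal. That $\widetilde\beta$ annihilates $F\cdot X^b$ for $|b|\le D-1$ reduces, after pulling out the common factor $c^{-(N+1)|b|}v^b$, to exactly the identity $\alpha(u_{s+1})=c^{-(N+1)}(pv_0+x_1v_1+\cdots+x_sv_s)$ established above. For the denominator bound: $\alpha(m)\in c^{-N}T$, and since every monomial occurring has total degree $|a|\le D$, the factor $c^{-(N+1)|a|}v^a$ lies in $c^{-(N+1)D}T$; hence $\widetilde\beta(m\,X^a)\in c^{-(N+(N+1)D)}T=c^{-(ND+N+D)}T=c^{-N'}T$, and the same bound passes to $\beta$ on the quotient.

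I expect the one genuinely non-obvious step to be the double role played by the choice of variable images: they must be chosen so that the \emph{single} relation $F$ goes to $0$, and then multiplicativity automatically forces all the higher relations $F\cdot X^b$ to vanish as well. Intertwined with this is the bookkeeping that produces the exact exponent $N'=ND+N+D$: the pushed-forward relation only becomes an honest relation over $T$ after multiplying by $c^{N}$ (using $\alpha(M)\subseteq c^{-N}T$), and one then needs the hypothesis in its strengthened form $c\,t_{s+1}\in(p,x_1,\dots,x_s)T$ rather than $t_{s+1}\in(p,x_1,\dots,x_s)T$ — so each $X_j$ acquires a denominator $c^{N+1}$, which, raised to the total degree $\le D$ and multiplied by the $c^{-N}$ already carried by $\alpha(m)$, gives precisely $c^{-N'}$. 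No further structure of $M$, $T$, or of the parameters enters.
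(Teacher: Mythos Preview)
Your proof is correct. The paper does not actually prove this lemma --- it merely quotes it as Lemma~5.1 of Hochster's \emph{Big Cohen--Macaulay algebras in dimension three} --- and your argument is the standard one found there: push the relation through $\alpha$, clear the $c^{-N}$ denominator to get a genuine relation in $T$, invoke the colon-ideal hypothesis to write $c^{N+1}\alpha(u_{s+1})=pv_0+\sum_j x_jv_j$ with $v_j\in T$, extend $\alpha$ by $X_j\mapsto c^{-(N+1)}v_j$ multiplicatively in the monomials, and the exponent bookkeeping $N+(N+1)D=ND+N+D=N'$ drops out exactly as you wrote.
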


\begin{proof}[Proof of Theorem \ref{theorem--big CM}]
Let $R'$ be the normalization of $R$ and let $Q'$ be a height one prime of $R'$ that lies over $Q$. Note that the residue field of $R'$ is still $k$ since we assumed $k$ is algebraically closed. If we can construct weakly functorial big Cohen-Macaulay algebras for $R'\to R'/Q'$ then the same follows for $R\to R/Q$. Thus without loss of generality we can assume $(R,\m,k)$ is normal. Let
\[\xymatrix{
R \ar[r] \ar[d] & R/Q \ar[d] \\
R_{\infty,n} \ar[r] & (R/Q)_{\infty,n}
}
\]
be the commutative diagram constructed in Lemma \ref{lemma--commutative diagram at the almost level}. Moreover, abusing notation slightly, suppose $g=p^{m_1}g_0$ in $A$ and $\overline{g}=p^{m_2}\overline{g}_0$ in $A/x_1A$ such that $p\nmid g_0$ and $p\nmid \overline{g}_0$.  %[The notational abuse comes about because $\bar{g_0}$ is not necessarily the image of $g_0$ in $A/x_1A$ as that image may be divisible by $p$.]

 Now $R_{\infty,n}$ and $(R/Q)_{\infty,n}$ are almost finite \'{e}tale over $A_\infty\langle\frac{p^n}{g}\rangle$ and $(A/x_1A)_\infty\langle\frac{p^n}{g}\rangle$ respectively, in particular they are almost finite projective over $A_\infty\langle\frac{p^n}{g}\rangle$ and $(A/x_1A)_\infty\langle\frac{p^n}{g}\rangle$ respectively (see \cite[Definition 4.3 and Proposition 4.10]{ScholzePerfectoidspaces}). Lemma \ref{lemma-annihilator of almost projective module} shows that, for every $n$ and $p^a$ such that $n> p^a+m_1+m_2$, with $c=(pg)^{2/p^a}$, if $x_{s+1}t_{s+1}=pt_0+x_1t_1+\cdots+x_st_s$ with $t_j\in R_{\infty,n}$ (resp., $(R/Q)_{\infty,n}$), we have that $ct_{s+1}\in (p, x_1,\dots,x_s)R_{\infty,n}$ (resp., $(R/Q)_{\infty,n}$).

By Theorem \ref{theorem--partial algebra modification}, it suffices to show that there is no bad double sequence of partial algebra modifications of $R$. Suppose there is one:
$$R\to M_1\to\cdots \to M_r\to(R/Q)\otimes M_r\to N_1\to\cdots\to N_s.$$
We claim that there exists a commutative diagram:
\tiny\tiny
\[\xymatrix{
R \ar[r] \ar[d] & M_1 \ar[r]\ar[d] &  \cdots \ar[r] & M_r \ar[d]^\alpha \ar[r] & (R/Q)\otimes M_r \ar[r] \ar[d] & N_1\ar[r] \ar[d] &\cdots \ar[r] & N_s\ar[d]^\beta \\
R_{\infty,n}[1/c] \ar[r]^-= & R_{\infty,n}[1/c] \ar[r]^-= & \cdots \ar[r]^-= & R_{\infty,n}[1/c]\ar[r] & (R/Q)_{\infty,n}[1/c] \ar[r]^-= & (R/Q)_{\infty,n}[1/c]\ar[r]^-=  &\cdots \ar[r]^-= & (R/Q)_{\infty,n}[1/c]
}
\]

\normalsize

The leftmost vertical map is the natural one; the first half of the diagram exists by Lemma \ref{lemma-Hochster}; the middle commutative diagram exists because the composite map $M_r\to R_{\infty,n}[1/c]\to (R/Q)_{\infty,n}[1/c]$ induces a map $(R/Q)\otimes M_r\to (R/Q)_{\infty,n}[1/c]$ since $(R/Q)_{\infty,n}[1/c]$ is an $R/Q$-algebra; the second half of the diagram exists by Lemma \ref{lemma-Hochster} again.

Let $D>0$ be an integer larger than the degree bounds for all the partial algebra modifications in this sequence. Applying Lemma \ref{lemma-Hochster} repeatedly to the first half of the diagram, we know there is an integer $M$ depending only on $D$, but not on $n$ and $p^a$, such that the image of $\alpha$ is contained in $c^{-M}R_{\infty,n}$. The image of the map $(R/Q)\otimes M_r\to (R/Q)_{\infty,n}[1/c]$ is contained in $c^{-M}(R/Q)_{\infty,n}$ because $R_{\infty,n}[1/c]\to (R/Q)_{\infty,n}[1/c]$ is induced by $R_{\infty,n}\to (R/Q)_{\infty,n}$. But then applying Lemma \ref{lemma-Hochster} repeatedly to the second half of the diagram, we know that there exists an integer $N$ depending on $M$ and $D$ (and hence only on $D$), but not on $n$ and $p^a$, such that the image of $\beta$ is contained in $c^{-N}(R/Q)_{\infty,n}$.

Now we chase the above diagram and we see that on the one hand, the element $1\in R$ maps to $1\in (R/Q)_{\infty,n}[1/c]$. But on the other hand, since the sequence is bad, the image of $1\in R$ in $N_s$ is in $\m N_s$ and hence the image of $1\in R$ is contained in $\m c^{-N}(R/Q)_{\infty,n}$ in $(R/Q)_{\infty,n}[1/c]$. Therefore we have $1\in\m((pg)^{2/p^a})^{-N}(R/Q)_{\infty,n}$, that is, $$(pg)^{2N/p^a}\in\m (R/Q)_{\infty,n}.$$ Because $\m$ is the maximal ideal of $R$ and $A=W(k)[[x_1,\dots,x_{d-1}]]\to R$ is module-finite, $\m^{N'}\subseteq (p, x_1,\dots,x_{d-1})R$ for some fixed $N'$. We thus have:
$$(pg)^{2NN'/p^a}\in(p,x_2,\dots,x_{d-1})(R/Q)_{\infty,n}.$$
Since $(R/Q)_{\infty,n}$ is almost finite \'{e}tale over $(A/x_1A)_\infty\langle \frac{p^n}{g}\rangle$, we know that
$$(pg)^{(2NN'+1)/p^a}\in (p,x_2,\dots,x_{d-1}) (A/x_1A)_\infty\langle \frac{p^n}{g}\rangle \cap (A/x_1A)_\infty.$$
But now Lemma \ref{lemma-forcing elements in Ainfinity} implies $(pg)^{(2NN'+2)/p^a}\in(p,x_2,\dots,x_{d-1})(A/x_1A)_\infty$ for all $p^a$. Because $N, N'$ do not depend on $p^a$, we know that $pg\in (p,x_2,\dots,x_{d-1})^{m}(A/x_1A)_\infty$ for all $m>0$. Since $(A/x_1A)_\infty$ is almost faithfully flat over $(A/x_1A)_{\infty,0}$ mod $p^m$, we know that $$p^2g\in (p,x_2,\dots,x_{d-1})^{m}(A/x_1A)_{\infty,0}\cap (A/x_1A)=(p,x_2,\dots,x_{d-1})^m(A/x_1A)$$ for all $m>0$ by faithfull flatness of $(A/x_1A)_{\infty, 0}$ over $A/x_1A$. But then $$p^2g\in \cap_m(p,x_2,\dots,x_{d-1})^m(A/x_1A)=0,$$ which is a contradiction.
\end{proof}

\begin{remark}
\label{remark--Bhatt}
We point out that the quantitative form of Scholze's Hebbarkeitssatz \cite[Theorem 4.2]{BhattDirectsummandandDerivedvariant} implies Lemma \ref{lemma-forcing elements in Ainfinity} and the following weaker form of Lemma \ref{lemma-annihilator of almost projective module}: if $\{S_n\}_n$ is a pro-system such that $S_n$ is an almost finite projective $A_\infty\langle\frac{p^n}{g}\rangle$-algebra, then for every $k\geq 1$ and $n\geq p^a+m$, $(p^{1/p^\infty})(pg)^{1/p^a}$ annihilates the image of $\frac{(p,x_1,\dots,x_s)S_{k+n} : x_{s+1}}{(p,x_1,\dots,x_s)S_{k+n}}$ in $\frac{(p,x_1,\dots,x_s)S_{k} : x_{s+1}}{(p,x_1,\dots,x_s)S_{k}}$. This weaker form is enough to establish Theorem \ref{theorem--big CM}, but one needs to modify the proof of Lemma \ref{lemma-Hochster} and Theorem \ref{theorem--big CM}: to extend each partial algebra modification to $R_{\infty,n}[1/c]$ one needs to decrease $n$ roughly by $p^a$ in order to trivialize bad relations (and keep control on the denominators). We leave it to the interested reader to carry out the details.
\end{remark}

\section{Applications}
The results obtained in the preceding section are strong enough to establish the mixed-characteristic case of Hochster-Huneke's vanishing conjecture for maps of Tor \cite{HochsterHunekeApplicationsofbigCMalgebras}.

\begin{theorem}
\label{theorem--vanishing theorem for maps of Tor}
Let $A\to R\to S$ be maps of Noetherian rings such that $A\to S$ is a local homomorphism of mixed characteristic regular local rings and $R$ is a module-finite torsion-free extension of $A$. Then for all $A$-modules $M$, the map $\Tor_i^A(M, R)\to \Tor_i^A(M, S)$ vanishes for all $i\geq 1$.
\end{theorem}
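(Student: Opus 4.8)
The plan is to reduce Theorem \ref{theorem--vanishing theorem for maps of Tor} to the weakly functorial existence of balanced big Cohen-Macaulay algebras proved in Theorem \ref{theorem--big CM}, following the standard pattern for deducing the vanishing conjecture for maps of Tor from (weakly functorial) big Cohen-Macaulay algebras (as in \cite{HochsterHunekeApplicationsofbigCMalgebras}). First I would make a series of harmless reductions. We may complete $S$ (completion is faithfully flat over the regular local ring $S$, so it detects vanishing of the Tor maps), and hence assume $S$ is complete. By replacing $R$ with $R/P$ for a suitable minimal prime $P$ of $R$ lying under the maximal ideal in the relevant component, and by base change, we may assume $R$ is a complete local domain that is module-finite and torsion-free over $A$; here one needs a standard devissage argument to see that killing such a minimal prime does not affect the vanishing statement (the point being that $R$ torsion-free over the domain $A$ means every minimal prime of $R$ contracts to $0$ in $A$, so $R$ embeds into the product of the $R/P_i$ after inverting a nonzero element of $A$, and one plays with the short exact sequences). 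Next, enlarging the residue field: one passes to a gonflement or uses the fact that one may replace $A$ by a complete local ring with algebraically closed residue field faithfully flat over it (a standard maneuver, cf. the reduction in \cite{HochsterHunekeApplicationsofbigCMalgebras}), so that the residue field $k$ of $S$, hence of $R$ and $A$, is algebraically closed.

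The key geometric input is the choice of the height one prime. Since $A \to R$ is module-finite and torsion-free with $R$ a domain, and $A \to S$ factors through $R$, let me look at the kernel of $R \to S$. If $R \to S$ is injective there is nothing to separate and one can apply Theorem \ref{theorem--partial algebra modification} directly to $R \to S$ (indeed $S$ being regular is itself a big Cohen-Macaulay algebra, and one gets a weakly functorial big CM algebra for $R$ mapping to $S$ after suitable completion). The interesting case is when $\ker(R \to S) = Q$ is a nonzero prime; because $R$ is module-finite over the regular local ring $A$ and $S$ is a regular local ring through which the map factors, one argues that $Q$ has height one: $R/Q \hookrightarrow S$ is module-finite and torsion-free over a subring, and dimension count together with $R$ being equidimensional (catenary, module-finite over regular) forces $\dim R/Q = \dim R - 1$. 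Both $R$ and $R/Q$ have mixed characteristic since they both embed into the mixed characteristic regular ring $S$ (so $p \neq 0$ in each, and $p$ is not a unit since these are local rings mapping to the local ring $S$). Now apply Theorem \ref{theorem--big CM} to $(R,\m,k)$ and $Q \subseteq R$ to obtain the commutative square with $B$, $C$ balanced big Cohen-Macaulay algebras for $R$ and $R/Q$. Composing with $R/Q \hookrightarrow S$ and using that $S$ is regular (hence a balanced big CM algebra for itself), one obtains a commutative diagram
\[
\xymatrix{
R \ar[r] \ar[d] & R/Q \ar[r] \ar[d] & S \ar[d]^-{=}\\
B \ar[r] & C \ar[r] & S'
}
\]
where $S'$ is a big Cohen-Macaulay $S$-algebra receiving $C$; concretely one sets $S' = C \otimes_{R/Q} S$ or its $\m_S$-adic completion and checks, using that $S$ is flat over $R/Q$ after suitable localization (or via a modification argument as in Theorem \ref{theorem--partial algebra modification}), that it is a balanced big CM $S$-algebra, or alternatively one directly re-runs the partial-algebra-modification criterion for the composite $R \to S$ to produce a big CM $S$-algebra $B_S$ with a compatible map $B \to B_S$. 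Either way we land a commutative square of ring maps from $A \to R$ to $B_R \to B_S$ with $B_R$ a big CM $A$-algebra and $B_S$ a big CM $S$-algebra.

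With this in hand the conclusion is formal. Since $A$ is regular of dimension $d$ and $B_R$ is a big Cohen-Macaulay $A$-algebra, a system of parameters of $A$ is a regular sequence on $B_R$; the same local criterion (Buchsbaum--Eisenbud / the acyclicity lemma, or Hochster's observation) gives that $B_R$ is faithfully flat over $A$ — here one uses that $A$ is regular, so $B_R$ flat over $A$ is equivalent to a system of parameters being a regular sequence on $B_R$, and faithfulness comes from $\m_A B_R \neq B_R$. Likewise $B_S$ is faithfully flat over the regular local ring $S$. Now for any $A$-module $M$ and $i \geq 1$, consider the commutative diagram of $A$-modules
\[
\xymatrix{
\Tor_i^A(M,R) \ar[r] \ar[d] & \Tor_i^A(M,S) \ar[d] \\
\Tor_i^A(M,B_R) \ar[r] & \Tor_i^A(M,B_S).
}
\]
The left vertical map is injective: $R \to B_R$ is a split injection because $A$ is regular (direct summand) — or more simply because $B_R$ is faithfully flat over $A$ and $R \subseteq B_R$... actually one uses that $R \to B_R$ is a pure/split map of $A$-modules, which holds since $B_R$ is a balanced big CM $A$-algebra over a regular $A$ (Theorem \ref{theorem--direct summand}, the direct summand theorem, applied to the module-finite extensions approximating $B_R$, or Hochster's original argument). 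And $\Tor_i^A(M,B_R) = 0$ for $i \geq 1$ because $B_R$ is $A$-flat. Hence $\Tor_i^A(M,R) \to \Tor_i^A(M,B_R) = 0$ is the zero map on an injection, forcing $\Tor_i^A(M,R) = 0$ when this argument applies; but in general $\Tor_i^A(M,R)$ need not vanish (it's only the \emph{map} to $\Tor_i^A(M,S)$ that vanishes), so instead one runs the diagram the other way: the composite $\Tor_i^A(M,R) \to \Tor_i^A(M,S) \to \Tor_i^A(M,B_S)$ equals $\Tor_i^A(M,R) \to \Tor_i^A(M,B_R) \to \Tor_i^A(M,B_S) = 0$, and the map $\Tor_i^A(M,S) \to \Tor_i^A(M,B_S)$ is injective since $S \to B_S$ is split (again the direct summand theorem, $S$ regular), so $\Tor_i^A(M,R) \to \Tor_i^A(M,S)$ is zero.

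I expect the main obstacle to be the bookkeeping in passing from the diagram of Theorem \ref{theorem--big CM} (where $C$ is a big CM algebra for $R/Q$) to a genuine big CM $S$-algebra receiving $C$ compatibly — i.e., verifying that one can extend $C$ along the module-finite torsion-free map $R/Q \to S$ to a balanced big CM $S$-algebra without breaking commutativity. The cleanest route is probably to fold this into the statement by invoking Theorem \ref{theorem--partial algebra modification} for $R \to S$ directly: show that a bad double sequence for $R \to S$ would, after reduction mod $Q$ (using $\ker(R\to S) = Q$ and that $S$ is a localization-and-completion-friendly extension of $R/Q$), produce a bad double sequence for $R \to R/Q$, contradicting Theorem \ref{theorem--big CM}. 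A secondary nuisance is the reduction to $k$ algebraically closed and to $R$ a complete local domain, which requires care because $R$ is only assumed module-finite torsion-free (not a domain) over $A$; the torsion-free hypothesis is exactly what lets one localize at a nonzero element of $A$ to split $R$ into its domain quotients without losing the $A$-module Tor information. Finally, one should double-check that $Q$ really has height one and not height zero or $\geq 2$: height zero is excluded because $R \to S$ is not the zero map (it sends $1$ to $1$) — oh, but $Q$ could in principle be larger; the resolution is that we do \emph{not} need $Q = \ker(R\to S)$, we only need \emph{some} height one prime $Q$ with $R/Q$ of mixed characteristic such that the image of $R$ in $S$ factors through a big CM algebra for $R/Q$ — and by induction on $\dim R - \dim(R/\ker)$ (repeatedly dividing by height one primes, each time applying Theorem \ref{theorem--big CM}) one climbs down from $R$ to $R/\ker(R\to S) \hookrightarrow S$, composing the big CM algebras along the chain. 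This inductive chaining is the structurally delicate point and is where I would spend the most care.
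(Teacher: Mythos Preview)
Your overall architecture is right: reduce, apply Theorem~\ref{theorem--big CM}, then chase the Tor diagram using that a balanced big Cohen--Macaulay algebra over a regular local ring is faithfully flat. The final diagram chase (your second version: the composite through $B_R$ is zero, and $\Tor_i^A(M,S)\to\Tor_i^A(M,B_S)$ is injective by faithful flatness of $B_S$ over the regular ring $S$) is exactly what the paper does.

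The gap is in the reduction. You correctly notice that $Q=\ker(R\to S)$ need not have height one, and you propose to fix this by ``inductive chaining'': pick a height-one prime $Q_1\subseteq Q$, apply Theorem~\ref{theorem--big CM} to $R\twoheadrightarrow R/Q_1$ to get $B_0\to C_0$, then repeat with $R/Q_1$ in place of $R$. The problem is compatibility across steps. The second application of Theorem~\ref{theorem--big CM} produces some balanced big Cohen--Macaulay algebra $B_1$ for (the normalization of) $R/Q_1$, but there is no map $C_0\to B_1$: Theorem~\ref{theorem--big CM} as stated and proved builds its big Cohen--Macaulay algebras from scratch via partial algebra modifications of $R$ and $R/Q$ and the specific perfectoid towers $R_{\infty,n}$, $(R/Q)_{\infty,n}$; it does not take a prescribed big Cohen--Macaulay algebra as input. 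So the chain of squares does not glue, and you never obtain a single commutative square $R\to S$, $B\to C$ with $C$ a big Cohen--Macaulay $S$-algebra. Your alternative suggestion---show that a bad double sequence for $R\to S$ induces one for $R\to R/Q$---has the same difficulty once $\mathrm{ht}\,Q>1$.

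The paper avoids this entirely by a different reduction (Lemma~\ref{lemma--Ranganathan}, due to Ranganathan/Hochster): using an extended Rees algebra trick, one reduces to the case $S=A/xA$ for a single regular parameter $x$. Then $Q=\ker(R\to S)$ contracts to $(x)$ and, by going-down for the integral extension $A\hookrightarrow R$ of normal domains, $\mathrm{ht}\,Q=1$ automatically. Moreover $R/Q=S$ on the nose, so the $C$ produced by Theorem~\ref{theorem--big CM} is already a balanced big Cohen--Macaulay $S$-algebra and your whole discussion of extending $C$ along $R/Q\hookrightarrow S$ becomes unnecessary. This Rees-algebra reduction is the missing idea in your proposal.
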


We need the following important reduction. This reduction is known to experts and is proved implicitly in \cite[Chapter 5.2]{Ranganathanthesis} and \cite[Section 13]{HochsterHomologicalConjecutersLimCMsequences}. We will give a sketch of the proof.

\begin{lemma}
\label{lemma--Ranganathan}
To prove Theorem \ref{theorem--vanishing theorem for maps of Tor}, we can assume $(A,\m)$ is complete, $R$ is a complete local domain, and $S=A/xA$ where $x\in \m-\m^2$.
\end{lemma}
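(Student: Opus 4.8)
The plan is to reduce Theorem \ref{theorem--vanishing theorem for maps of Tor} to the stated special case through a sequence of standard faithfully flat base changes and a dévissage on the modules $M$ and the ring $R$. First I would reduce to the case where $M$ is a finitely generated $A$-module: since $\Tor$ commutes with filtered colimits and every $A$-module is a filtered colimit of its finitely generated submodules, and since a map out of a colimit vanishes iff each of the component maps factors through zero after enlarging the index, it suffices to treat finitely generated $M$; in fact, by taking a finite free resolution locally and chasing, one reduces further to $M$ being a cyclic module $A/I$, or even to $M = A/\m^t$ after another colimit argument, but the cleanest formulation is just "finitely generated." Next, completion: the map $A \to \widehat{A}$ is faithfully flat, $\widehat{A}$ is again regular local of mixed characteristic, $\widehat{A}\otimes_A R$ is module-finite and torsion-free over $\widehat{A}$ (torsion-freeness is preserved because $\widehat{A}$ is flat and $A\to R$ is injective), and $\widehat{A}\otimes_A S = \widehat{S}$ since $S$ is already local with maximal ideal containing $\m S$; moreover $\Tor_i^A(M,R)\otimes_A \widehat{A} = \Tor_i^{\widehat A}(\widehat M, \widehat A\otimes_A R)$ and similarly for $S$, so the map vanishes after $\otimes_A\widehat A$ iff it vanishes, by faithful flatness. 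Thus we may assume $A$ is complete.

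Once $A$ is complete, I would handle the passage $S = A/xA$ with $x \in \m - \m^2$ as follows. A mixed characteristic complete regular local ring $S$ that receives a local map from the complete regular local ring $A$ can, after the reductions above, be assumed to be a quotient of $A$ by a regular sequence; running an induction on $\dim A - \dim S$, it suffices to kill one element at a time, and since $S$ is itself regular the element we kill at each stage can be taken to be part of a regular system of parameters, i.e. in $\m - \m^2$. The point is that at each stage we again have a tower $A' \to R' \to S'$ of the same shape with $A' \to R'$ module-finite torsion-free (torsion-freeness over $A' = A/(\text{part of a reg. seq.})$ uses that $R$ is Cohen–Macaulay — or rather uses a careful argument, since $R/xR$ need not be torsion-free over $A/xA$ in general; one instead passes to $R/(x\text{-torsion})$ or replaces $R$ by $R \otimes_A A'$ modulo its torsion and checks $\Tor$ only gets smaller). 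This last point — controlling what happens to $R$ and its torsion when we cut down $A$ by one element — is the step I expect to be the main obstacle, and it is presumably where the references \cite[Chapter 5.2]{Ranganathanthesis} and \cite[Section 13]{HochsterHomologicalConjecutersLimCMsequences} do the real work; the trick is that vanishing of the map of Tor is inherited by direct summands and quotients of $R$ appropriately, so one can afford to replace $R$ by a well-chosen module-finite torsion-free extension of $A'$ that surjects onto $R\otimes_A A'$ modulo torsion.

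Finally I would reduce $R$ to a complete local domain: after completing $A$ we may complete $R$ (it is module-finite over the complete local ring $A$, hence already complete and a product of complete local rings; localizing at one maximal ideal and noting $\Tor$ and the maps in question decompose along the factors, we may assume $R$ is local), and then replace $R$ by $R/\mathfrak p$ for a minimal prime $\mathfrak p$ with $\dim R/\mathfrak p = \dim A$ — such a $\mathfrak p$ exists since $A \to R$ is module-finite injective, so some minimal prime of $R$ contracts to $(0)$ in $A$ — which is still module-finite and now torsion-free (indeed injective on the image of $A$) over $A$, and again vanishing for $R$ follows once we know it for this quotient, using that $R \to R/\mathfrak p$ is a quotient map and chasing the induced maps on $\Tor$ (here one uses that $R$, being torsion-free over $A$, injects into a localization where $\mathfrak p$ survives, or more directly an induction on the number of minimal primes together with the short exact sequences relating $R$, $R/\mathfrak p$ and $\mathfrak p R$). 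Assembling these three reductions — finitely generated $M$ and then completion of $A$; domain reduction for $R$; killing a single element of $\m - \m^2$ to reach $S = A/xA$ — gives the statement of the lemma; I would present the torsion-control step in detail and treat the rest as routine, citing \cite{Ranganathanthesis} and \cite{HochsterHomologicalConjecutersLimCMsequences} for the parts that are genuinely bookkeeping.
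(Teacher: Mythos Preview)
Your proposal has a genuine gap in the step reducing $S$ from $A/P$ (with $P$ generated by several regular parameters) down to $A/xA$. You propose an induction on $\dim A - \dim S$, replacing $A$ by $A' = A/x_1A$ and $R$ by some torsion-free $R'$ over $A'$. The difficulty you flag --- torsion-freeness of $R'$ --- is real but fixable (take $R' = R/Q_1$ for a minimal prime $Q_1 \supseteq x_1R$ contained in $\ker(R \to S)$). The problem you do \emph{not} flag is the change of base ring: your induction hypothesis yields vanishing of $\Tor_i^{A'}(M', R') \to \Tor_i^{A'}(M', S)$ for $A'$-modules $M'$, whereas you need vanishing of $\Tor_i^A(M, R) \to \Tor_i^A(M, S)$ for $A$-modules $M$. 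The change-of-rings long exact sequence coming from $0 \to A \xrightarrow{x_1} A \to A' \to 0$ relates the two, but a diagram chase shows the $\Tor_0^{A'}$ contributions obstruct the conclusion; nor can you factor through a $c=1$ instance of the shape $A \to R \to A'$, since there is no map $R \to A'$. So the induction does not close. (A smaller gap: you assert that after completion $S$ may be assumed to be a quotient of $A$, but completion alone does not make $A \to S$ surjective; this step already needs \cite[(4.5)(a)]{HochsterHunekeApplicationsofbigCMalgebras}, which enlarges $A$.)

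The paper's route avoids induction on the number of generators of $P$ altogether. After the standard reductions ($A$ complete, $R$ a complete local domain, $A \to S$ surjective so $S = A/P$), one first reduces to $i = 1$ by passing to syzygies, and then to $M = A/I$ cyclic via \cite[Lemma 5.2.1]{Ranganathanthesis}. Then \cite[Lemma 13.6]{HochsterHomologicalConjecutersLimCMsequences} translates the vanishing of $\Tor_1^A(A/I, R) \to \Tor_1^A(A/I, S)$ into the ideal-theoretic statement $IQ \cap P = IP$, where $Q = \ker(R \to S)$. The key device is now the extended Rees ring: set $\widetilde{A} = A[Pt, t^{-1}]$, $\widetilde{R} = R[Pt, t^{-1}]$, $\widetilde{S} = \widetilde{A}/t^{-1}\widetilde{A}$. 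Since $P$ is generated by part of a regular system of parameters, $\widetilde{A}$ and $\widetilde{S}$ are again regular, and there is a homogeneous prime $\widetilde{Q} \subseteq \widetilde{R}$ containing $Q$ and contracting to $t^{-1}\widetilde{A}$. Proving the single-generator case for $\widetilde{A} \to \widetilde{R} \to \widetilde{S}$ with $\widetilde{M} = \widetilde{A}/I\widetilde{A}$ gives $I\widetilde{Q} \cap t^{-1}\widetilde{A} = It^{-1}\widetilde{A}$; reading off degree $0$ yields $IQ \cap P = IP$. This Rees-ring trick is the missing idea that replaces your induction.
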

\begin{proof}[Sketch of proof]
We can assume $M$ is finitely generated. Replacing $M$ by its first module of syzygies over $A$ repeatedly, we only need to prove the case $i=1$. We may further assume $M=A/I$ by \cite[Lemma 5.2.1]{Ranganathanthesis} or \cite[Page 15]{HochsterHomologicalConjecutersLimCMsequences}.\footnote{In this process we may lose $A$ and $S$ being local, but we can always localize $A$ and $S$ again to assume they are local (and have mixed characteristic, since otherwise Theorem \ref{theorem--vanishing theorem for maps of Tor} is known).} Next by \cite[(4.5)(a)]{HochsterHunekeApplicationsofbigCMalgebras}, we can assume $A$ and $S$ are both complete, $R$ is a complete local domain, and $A\to S$ is surjective; i.e., $S=A/P$ where $P$ is generated by part of a regular system of parameters of $A$ (note that $p\notin P$ since $S$ has mixed characteristic). It follows that $S=R/Q$ for some prime ideal $Q$ of $R$ lying over $P$. After all these reductions, we note that by \cite[Lemma 13.6]{HochsterHomologicalConjecutersLimCMsequences}, $\Tor_1^A(A/I, R)\to \Tor_1^A(A/I, S)$ vanishes if and only if $IQ\cap P=IP$.

We next want to reduce to the case that $P$ is generated by one element. The trick is to replace $A$ by its extended Rees ring $\widetilde{A}=A[Pt, t^{-1}]$, $R$ by $\widetilde{R}=R[Pt, t^{-1}]$ and $S$ by $\widetilde{S}=\widetilde{A}/t^{-1}\widetilde{A}$. Since $P$ is generated by part of a regular system of parameters, $\widetilde{A}$ and $\widetilde{S}$ are still regular. The point is that there is a homogeneous prime ideal $\widetilde{Q}\subseteq\widetilde{R}$ that contains $Q$ and contracts to $t^{-1}\widetilde{A}\subseteq \widetilde{A}$ (see \cite[Proof of Theorem 5.2.6]{Ranganathanthesis} or \cite[Page 16]{HochsterHomologicalConjecutersLimCMsequences}), thus we have $\widetilde{A}\to \widetilde{R}\to\widetilde{S}$. Therefore if we can prove Theorem \ref{theorem--vanishing theorem for maps of Tor} for $\widetilde{A}\to \widetilde{R}\to\widetilde{S}$ and $M=\widetilde{A}/I\widetilde{A}$, then \cite[Lemma 13.6]{HochsterHomologicalConjecutersLimCMsequences} implies that $I\widetilde{Q}\cap t^{-1}\widetilde{A}=It^{-1}\widetilde{A}$. Comparing the degree $0$ part, we see that $IQ\cap P=IP$.

Finally, we can localize $\widetilde{A}$ and $\widetilde{S}$ and complete, and reduce to the case $\widetilde{R}$ is a complete local domain as in \cite[(4.5)(a)]{HochsterHunekeApplicationsofbigCMalgebras}. Note that $\widetilde{S}$ is obtained from $\widetilde{A}$ by killing one element (and we may assume $\widetilde{S}$ still has mixed characteristic after localization). We thus obtain all the desired reductions.
\end{proof}

%\textcolor{red}{not familiar with this reduction, but presumably it is right. I also should check the result used below that balanced Cohen-Macaulay algebras over regular rings are faithfully flat.}

\begin{proof}[Proof of Theorem \ref{theorem--vanishing theorem for maps of Tor}]
By Lemma \ref{lemma--Ranganathan}, we may assume $R$ is a complete local domain and $S=A/xA$. It follows that $S=R/Q$ for a height one prime $Q$ of $R$. Since $A\to S$ and $R\to S$ are both surjective, $A$, $R$, $S$ have the same residue field $k$. We fix a coefficient ring $W(k)$ of $A$, then the images of $W(k)$ in $R$ and $S$ are also coefficient rings of $R$ and $S$. Replacing $A$, $R$, $S$ by their faithfully flat extensions $A\widehat{\otimes}_{W(k)}W(\overline{k})$, $R\widehat{\otimes}_{W(k)}W(\overline{k})$, $S\widehat{\otimes}_{W(k)}W(\overline{k})$ does not affect whether the map on Tor vanishes or not. Thus without loss of generality we may assume $k$ is algebraically closed.

By Theorem \ref{theorem--big CM}, we have a commutative diagram:
\[\xymatrix{
R \ar[r] \ar[d]& S=R/Q \ar[d]\\
B\ar[r]  & C
}
\]
where $B$ and $C$ are balanced big Cohen-Macaulay algebras for $R$ and $S$ respectively. This induces a commutative diagram:
\[\xymatrix{
\Tor_i^A(M, R) \ar[r] \ar[d]& \Tor_i^A(M, S)\ar[d]\\
\Tor_i^A(M, B) \ar[r] & \Tor_i^A(M, C)
}
\]
Since $B$ is a balanced big Cohen-Macaulay algebra over $R$ (and hence also over $A$), it is faithfully flat over $A$ so $\Tor_i^A(M, B)=0$ for all $i\geq 1$. Moreover, $C$ is faithfully flat over $S$ since it is a balanced big Cohen-Macaulay {algebra} over $S$ and $S$ is regular, thus $\Tor_i^A(M, S)\to \Tor_i^A(M, C)$ is injective. Chasing the diagram above we know that the map $\Tor_i^A(M, R)\to \Tor_i^A(M, S)$ vanishes for all $i\geq 1$.
\end{proof}

A local ring $(R,\m)$ of dimension $d$ is called {\it pseudo-rational} if it is normal, Cohen-Macaulay, analytically unramified (i.e., the completion $\widehat{R}$ is reduced), and if for every projective and birational map  $\pi$: $W\to \Spec R$, the canonical map $H_\m^d(R)\to H_E^d(W, O_W)$ is injective where $E=\pi^{-1}(\m)$ denotes the closed fibre. In characteristic $0$, pseudo-rational singularities are the same as rational singularities. Very recently, Kov\'{a}cs \cite{KovacsSuperRational} has proved a remarkable result that, in {\it all} characteristics, if $\pi$: $X\to\Spec R$ is projective and birational, where $X$ is Cohen-Macaulay and $R$ is pseudo-rational, then $\mathbf{R}\pi_*O_X=R$.

In equal characteristic, direct summands of regular rings are pseudo-rational \cite{BoutotRationalsingularitiesandquotientsbyreductivegroups} \cite{HochsterHunekeTC1}. This is usually called Boutot's theorem. It is well known that the vanishing conjecture for maps of Tor in a given characteristic implies that direct summands of regular rings are Cohen-Macaulay \cite[(4.3)]{HochsterHunekeApplicationsofbigCMalgebras}. What we want to prove next is the analog of Boutot's theorem that direct summands of regular rings are pseudo-rational in mixed characteristic. This in fact also follows formally from the vanishing conjecture for maps of Tor \cite{MaThevanishingconjectureformapsofTorandderivedsplinters}. Since the full details were not written down explicitly in \cite{MaThevanishingconjectureformapsofTorandderivedsplinters}, we give a complete argument here. We first recall the following Sancho de Salas exact sequence \cite{SanchodeSalasBlowingupmorphismswithCohenMacaulayassociatedgradedrings}.

Let $T=R[Jt]=R\oplus Jt\oplus J^2t^2\oplus\cdots$ and let $W=\Proj T\to \Spec R$ be the blow up with $E=\pi^{-1}(\m)$. Pick $f_1,\dots,f_n\in Jt=[T]_1$ such that $U=\{U_i=\Spec[T_{f_i}]_0\}$ is an affine open cover of $W$. We have an exact sequence of chain complexes:
\[0\to \check{C}^\bullet(U, O_W)[-1]\to [C^\bullet(f_1,\dots,f_n, T)]_0\to R\to 0.\]
Since $\check{C}^\bullet(U, O_W)\cong\mathbf{R}\pi_*O_W$ and $C^\bullet(f_1,\dots,f_n, T)=[\mathbf{R}\Gamma_{T_{>0}}T]_0$ the above sequence gives us (after rotating) an exact triangle:
\[[\mathbf{R}\Gamma_{T_{>0}}T]_0\to R\to \mathbf{R}\pi_*O_W\xrightarrow{+1}\]
Applying $\mathbf{R}\Gamma_{\m}$, we get:
\[
[\mathbf{R}\Gamma_{\m+T_{>0}}T]_0 \to  \mathbf{R}\Gamma_{\m}R  \to \mathbf{R}\Gamma_{\m}\mathbf{R}\pi_*O_W \xrightarrow{+1}.
\]
Taking cohomology we get the Sancho de Salas exact sequence:
\begin{equation}
\label{equation--sancho de salas}
\xymatrix{
{ } \ar[r] & h^d([\mathbf{R}\Gamma_{\m+T_{>0}}T]_0) \ar[r] \ar[d]^= &  h^d(\mathbf{R}\Gamma_{\m}R) \ar[r] \ar[d]^= & h^d(\mathbf{R}\Gamma_{\m}\mathbf{R}\pi_*O_W)\ar[d]^= \ar[r] & { } \\
{ } \ar[r] & [H_{\m+T_{>0}}^d(T)]_0 \ar[r] & H^d_\m(R) \ar[r] & H^d_E(W, O_W) \ar[r] & {}.
}
\end{equation}

We also recall that $R\to S$ is {\it pure} if $R\otimes M\to S\otimes M$ is injective for every $R$-module $M$. This is slightly weaker than saying that $R\to S$ splits as a map of $R$-modules. If $R$ is an $A$-algebra and $R\to S$ is pure, then $\Tor_i^A(M, R)\to \Tor_i^A(M, S)$ is injective for every $i$ \cite[(2.1)(h)]{HochsterHunekeApplicationsofbigCMalgebras}, in particular, $H_\m^i(R)\to H_\m^i(S)$ is injective for every $i$.

We are ready to prove the following corollary. We state the result in the local setting, but the general case reduces immediately to the local case.

\begin{corollary}
\label{corollary--mixed char Boutot}
Let $(R,\m)\to (S,\n)$ be a pure map of local rings such that $(S,\n)$ is regular of mixed characteristic. Then $R$ is pseudo-rational. In particular, direct summands of regular rings are pseudo-rational.
\end{corollary}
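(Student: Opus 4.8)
The plan is to verify, one at a time, the four defining properties of a pseudo-rational ring for $R$: normality, the Cohen--Macaulay property, analytic unramifiedness, and the injectivity of $H^d_\m(R)\to H^d_E(W,\O_W)$ for every projective birational $\pi\colon W\to\Spec R$ (here $d=\dim R$ and $E=\pi^{-1}(\m)$). Note first that purity makes $R\hookrightarrow S$ injective, so $R$ is a domain and, since $S$ has mixed characteristic, so does $R$. Normality is formal: $S$ is a normal domain, and if $x/y\in\Frac R$ is integral over $R$ it is integral over $S$, hence lies in $S$; then $x\in yS\cap R=yR$ because purity applied to the $R$-module $R/yR$ gives $yS\cap R=yR$, so $x/y\in R$. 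The Cohen--Macaulay property is \cite[(4.3)]{HochsterHunekeApplicationsofbigCMalgebras} with ``pure'' in place of ``direct summand'': reducing to the case $R$ complete and choosing a complete regular local ring $A$ with $A\hookrightarrow R$ module-finite, the triple $A\to R\to S$ satisfies the hypotheses of Theorem \ref{theorem--vanishing theorem for maps of Tor}, so the maps $\Tor_i^A(M,R)\to\Tor_i^A(M,S)$ are zero for $i\ge 1$; but purity forces them to be injective \cite[(2.1)(h)]{HochsterHunekeApplicationsofbigCMalgebras}, so $\Tor_i^A(M,R)=0$ for all $i\ge 1$ and all $M$, and taking $M=A/\m_A$ shows $R$ is $A$-free, hence Cohen--Macaulay. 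Analytic unramifiedness: replacing $S$ by its completion (still regular local, and still receiving a pure map from $R$, a composite of a pure and a faithfully flat map), the ring $\widehat R=\varprojlim_n R/\m^nR$ injects into $\varprojlim_n S/\m^nS$ — by the purity injections $R/\m^nR\hookrightarrow S/\m^nS$ and left-exactness of $\varprojlim$ — and the latter ring is the $\m S$-adic completion of the excellent regular ring $S$, hence reduced; thus $\widehat R$ is reduced.

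It remains to prove the cohomological injectivity. We may assume $R$ is a complete local domain, and by a standard reduction it suffices to treat $W=\Proj T$ with $T=R[Jt]$ for an $\m$-primary ideal $J$; fix a system of parameters $x_1,\dots,x_d$ of $R$ contained in $J$. The Sancho de Salas sequence (\ref{equation--sancho de salas}) identifies the kernel of $H^d_\m(R)\to H^d_E(W,\O_W)$ with the image of the natural map
\[
\alpha\colon [H^d_{\m+T_{>0}}(T)]_0\longrightarrow H^d_\m(R),
\]
so the task is to show $\alpha=0$.

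To prove $\alpha=0$ I would follow the strategy of \cite{MaThevanishingconjectureformapsofTorandderivedsplinters} and deduce it from Theorem \ref{theorem--vanishing theorem for maps of Tor}. The most transparent route is to pass to a balanced big Cohen--Macaulay $R$-algebra $B$ for which the structure map $R\to B$ is pure; extracting such a $B$ in mixed characteristic is the crux, and is where the results of this paper (Theorems \ref{theorem--big CM} and \ref{theorem--vanishing theorem for maps of Tor}) enter, fed the purity of $R\hookrightarrow S$. Granting $B$, set $T_B=B[JBt]$. Purity of $R\to B$ makes $T\to T_B$ pure (on each graded piece $J^i\hookrightarrow J^iB$ is pure), so the induced maps $[H^d_{\m+T_{>0}}(T)]_0\to[H^d_{\m T_B+(T_B)_{>0}}(T_B)]_0$ and $H^d_\m(R)\to H^d_\m(B)$ are injective, and functoriality of the Sancho de Salas triangle gives a commutative square
\[
\xymatrix{
[H^d_{\m+T_{>0}}(T)]_0 \ar[r]\ar[d] & H^d_\m(R)\ar[d]\\
[H^d_{\m T_B+(T_B)_{>0}}(T_B)]_0 \ar[r] & H^d_\m(B)
}
\]
with top row $\alpha$ and bottom row, say, $\alpha_B$. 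Since $x_1,\dots,x_d$ is a regular sequence on $B$, a \v{C}ech-complex computation on $T_B$ — in the spirit of Hochster's analysis of partial algebra modifications \cite{HochsterBigCohen-Macaulayalgebrasindimensionthree} — shows $\alpha_B=0$; injectivity of the right-hand vertical map then forces $\alpha=0$, as desired. (One can alternatively bypass $B$ and reduce $\alpha=0$ directly to an instance of Theorem \ref{theorem--vanishing theorem for maps of Tor} by passing to the extended Rees algebra $R[Jt,t^{-1}]$ and applying the d\'evissage of Lemma \ref{lemma--Ranganathan} to bring the blow-up data into the required form over a regular local ring.)

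The main obstacle is precisely this last step. In equal characteristic one takes $B$ to be a big Cohen--Macaulay algebra built from $R^+$ and the purity of $R\to B$ is automatic (a pure subring of a regular ring of positive characteristic is a splinter), whereas in mixed characteristic the existence of a suitable pure big Cohen--Macaulay algebra — equivalently, the applicability of the vanishing theorem for maps of Tor to the Rees-algebra data of the blow-up — is exactly what the perfectoid constructions of this paper supply. A secondary point requiring care is the vanishing of $\alpha_B$ over the non-Noetherian ring $B$, which rests only on the regular-sequence property of $x_1,\dots,x_d$ on $B$ and on no finiteness hypothesis.
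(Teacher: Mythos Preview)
Your treatment of normality, Cohen--Macaulayness, and analytic unramifiedness is essentially the paper's argument (with a bit more detail in places), and is fine.

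The gap is in the cohomological-injectivity step. Your primary route requires a balanced big Cohen--Macaulay $R$-algebra $B$ with $R\to B$ \emph{pure}, and you assert that Theorems~\ref{theorem--big CM} and~\ref{theorem--vanishing theorem for maps of Tor} supply this. They do not. Theorem~\ref{theorem--big CM} produces compatible big Cohen--Macaulay algebras for a height-one \emph{surjection} $R\to R/Q$, with no purity statement for $R\to B$; Theorem~\ref{theorem--vanishing theorem for maps of Tor} is a vanishing statement for Tor maps, not an existence statement for pure big Cohen--Macaulay algebras. One cannot simply take $B=S$ (or a big Cohen--Macaulay $S$-algebra), because a system of parameters of $R$ need not map to a regular sequence in $S$ when $\dim S>\dim R$. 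Even granting such a $B$, the assertion that $\alpha_B=0$ for the Rees algebra $T_B=B[JBt]$ ``by a \v{C}ech-complex computation'' is not justified: regularity of $x_1,\dots,x_d$ on $B$ says nothing direct about the local cohomology of $T_B$ in degree $d$. Your parenthetical ``alternative'' via the extended Rees algebra is gestured at but not an argument.

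The paper's route avoids all of this by applying Theorem~\ref{theorem--vanishing theorem for maps of Tor} \emph{directly} to the Rees-algebra data. After localizing $T=R[Jt]$ at $\m+T_{>0}$, completing, and killing a minimal prime, one has a surjection $(T,\m)\twoheadrightarrow(R,\m)$ with $T$ a complete local domain of dimension $d+1$; choose a Noether normalization $(A,\m_0)\hookrightarrow T$. Now feed the chain $A\to T\to S$ into Theorem~\ref{theorem--vanishing theorem for maps of Tor} with $M=H^{d+1}_{\m_0}(A)$. The \v{C}ech complex on a regular system of parameters of $A$ is a flat resolution of $H^{d+1}_{\m_0}(A)$, whence $\Tor_1^A(H^{d+1}_{\m_0}(A),N)\cong H^d_\m(N)$ for any $A$-module $N$; thus the composite $H^d_\m(T)\to H^d_\m(R)\to H^d_\m(S)$ vanishes. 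Purity of $R\to S$ makes the second map injective, so $H^d_\m(T)\to H^d_\m(R)$ vanishes, and the Sancho de Salas sequence gives the desired injectivity. No auxiliary big Cohen--Macaulay algebra for $R$ is needed; the regular ring $S$ itself, together with purity, does the work.
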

\begin{proof}
We can complete $R$ and $S$ at $\m$ and $\n$ respectively to assume both $R$ and $S$ are complete. $R$ is normal since pure subrings of normal domains are normal. By Cohen's structure theorem, we have a module-finite extension $A\to R$ such that $A$ is a complete regular local ring. Let $x_1,\dots,x_d$ be a regular system of parameters of $A$. We apply Theorem \ref{theorem--vanishing theorem for maps of Tor} to $M=A/(x_1,\dots,x_d)$. We have $$\Tor_i^A(A/(x_1,\dots,x_d), R)\to \Tor_i^A(A/(x_1,\dots,x_d), S)$$ vanishes for all $i\geq 1$. However, we also know that this map is injective because $R\to S$ is pure. Thus we have $\Tor_i^A(A/(x_1,\dots,x_d), R)=H_i(x_1,\dots,x_d, R)=0$ for all $i\geq 1$. This implies $x_1,\dots,x_d$ is a regular sequence on $R$ and hence $R$ is Cohen-Macaulay. Obviously, the complete local domain $R$ is analytically unramified.

We now check the last condition of pseudo-rationality. Let $W\to \Spec R$ be a projective birational map, thus $W\cong \Proj T=\Proj R\oplus Jt\oplus J^2t^2\oplus\cdots$ for some ideal $J\subseteq R$. We now apply the Sancho de Salas exact sequence (\ref{equation--sancho de salas}) to get:
\[ \xymatrix{
[H_{\m+T_{>0}}^d(T)]_0 \ar[r] \ar@{^{(}->}[d]  & H_\m^d(R) \ar[r]\ar[d]^= & H_E^d(W,O_W)  \\
H_{\m+T_{>0}}^d(T) \ar[r] & H_\m^d(R).
}\]

Thus in order to show $H_\m^d(R)\to H_E^d(W,O_W)$ is injective, it suffices to show $H_{\m+T_{>0}}^d(T)\to H_\m^d(R)$ vanishes. We can localize $T$ at the maximal ideal $\m+T_{>0}$, complete, and kill a minimal prime without affecting whether the map vanishes or not. Hence it is enough to show that if $(T,\m) \twoheadrightarrow (R,\m)$ is a surjection such that $T$ is a complete local domain of dimension $d+1$, then $H_\m^d(T)\to H_\m^d(R)$ vanishes. By Cohen's structure theorem there exists $(A,\m_0)\to (T,\m)$ a module-finite extension such that $A$ is a complete regular local ring. We consider the chain of maps
$$A\to T\to R\to S$$
Applying Theorem \ref{theorem--vanishing theorem for maps of Tor} to $A\to T\to S$ and $M=H_{\m_0}^{d+1}(A)$, we know that the composite map
$$\Tor_1^A(H_{\m_0}^{d+1}(A), T)\to \Tor_1^A(H_{\m_0}^{d+1}(A), R)\to \Tor_1^A(H_{\m_0}^{d+1}(A), S)$$
vanishes. Since the \v{C}ech complex on a regular system of parameters gives a flat resolution of $H_{\m_0}^{d+1}(A)$ over $A$, we know that $\Tor_1^A(H_{\m_0}^{d+1}(A), N)\cong H_{\m_0}^{d}(N)$ for every $A$-module $N$. Thus the composite map $$H_\m^d(T)\to H_\m^d(R)\to H_\m^d(S)$$ vanishes. But then $H_\m^d(T)\to H_\m^d(R)$ vanishes because $H_\m^d(R)\to H_\m^d(S)$ is injective since $R\to S$ is pure.
\end{proof}

\begin{remark}
\label{remark--Boutot also follows from Andre and Bhatt}
Corollary \ref{corollary--mixed char Boutot} can be also obtained by combining the main results of \cite{AndreDirectsummandconjecture}, \cite{BhattDirectsummandandDerivedvariant} and using the following argument: the existence of weakly functorial big Cohen-Macaulay algebras for injective ring homomorphisms \cite[Remarque 4.2.1]{AndreDirectsummandconjecture} implies that direct summands of regular rings are Cohen-Macaulay, but we also know they are derived splinters (because this is true for regular rings by \cite[Theorem 1.2]{BhattDirectsummandandDerivedvariant} and it is easy to see that direct summand of derived splinters are still derived splinters). Now the argument of \cite[Lemma 7.5]{KovacsSuperRational} implies that Cohen-Macaulay derived splinters are pseudo-rational.
\end{remark}

\begin{remark}
\label{remark--derived direct summand}
Last we point out that by \cite[Remark 5.12]{MaThevanishingconjectureformapsofTorandderivedsplinters}, Theorem \ref{theorem--vanishing theorem for maps of Tor} gives a new proof of the derived direct summand conjecture \cite[Theorem 6.1]{BhattDirectsummandandDerivedvariant}, that is, if $R$ is a complete regular local ring of mixed characteristic and $\pi$: $X\to \Spec R$ is a proper surjective map, then $R\to \mathbf{R}\pi_*O_X$ splits in the derived category of $R$-modules. Our proof is different from \cite{BhattDirectsummandandDerivedvariant} as it does not use Scholze's vanishing theorem \cite[Proposition 6.14]{ScholzePerfectoidspaces}. In fact, tracing the arguments of \cite[Theorem 5.11 and Remark 5.13]{MaThevanishingconjectureformapsofTorandderivedsplinters}, one can show that our Theorem \ref{theorem--big CM} leads to a stronger result that complete local rings that are pure inside all their big Cohen-Macaulay algebras (e.g., complete regular local rings) are derived splinters.
\end{remark}

\bibliographystyle{skalpha}
\bibliography{CommonBib}

\end{document}